\newcommand{\be}{\begin{equation}}
\newcommand{\ee}{\end{equation}}
\newcommand{\bd}{\[}
\newcommand{\ed}{\]}
\newcommand{\Z}{\mathbb{Z}}
\def\è{\`{e}}
\def\à{\`{a}}
\def\ò{\`{o}}
\def\ù{\`{u}}
\def\ì{\`{i}}
\def\é{\'{e}}
\newcommand{\means}[1]{\hbox{$ [\kern -.4em [\, {#1}\, ]\kern -.4em]$}}
\DeclareMathOperator{\Aut}{Aut}
\begin{document}

\title{Circular nearrings: geometrical and combinatorial aspects}
\author{Anna Benini\inst{1}\and Achille Frigeri\inst{2} \and Fiorenza Morini\inst{3}}
\institute{Università di Brescia, \email{anna.benini@ing.unibs.it}\and Politecnico di Milano, \email{achille.frigeri@polimi.it}
\and Università di Parma, \email{fiorenza.morini@unipr.it}}

\maketitle


\begin{abstract}
Let $(N,\Phi)$ be a circular Ferrero pair. We define the disk with center $b$ and radius $a$, $\mathcal{D}(a;b)$, as
\bd
    \mathcal{D}(a;b)=\{x\in \Phi(r)+c\mid r\neq 0,\ b\in \Phi(r)+c,\ |(\Phi(r)+c)\cap (\Phi(a)+b)|=1\} .
\ed
We prove that in the field-generated case there are many analogies with the
Euclidean geometry. Moreover, if $\mathcal{B}^{\mathcal{D}}$ is the set of all disks, then, in some interesting cases, we show that the incidence structure $(N,\mathcal{B}^{\mathcal{D}},\in)$ is actually a balanced incomplete block design.

\end{abstract}

\keywords{Planar nearring -- BIBD -- Circular -- Disk -- Field}

\section{Introduction}

Almost at the same time, but independently, Clay \cite{AC-68-1,AC-68-2} and Ferrero \cite{F-70} studied a new kind of algebraic structures, namely planar nearrings. It turns out that this class of nearrings is closely related with incidence geometry \cite{C-88-2}, combinatorics \cite{C-72}, coding theory \cite{F-90} and experimental designs \cite{P-83}.
An example in \cite{AC-68-1} proves that the circles of the usual Euclidean plane can be described in terms of the structural parts of a planar nearring whose additive group is $(\mathbb{C},+)$, leading the author to define the concept of circularity for planar nearrings. Moreover it is possible to construct double planar nearrings with a circular component as the following example shows.
\begin{example}\label{es1}
On the complex number field $(\mathbb{C},+,\cdot)$, for $a,b\in \mathbb{C}$, define two operations $*$ and $\circ$:
\bd
\begin{array}{l}
    a*b=|a|\cdot b,\\
    a\circ b=
    \begin{cases}
        0 & \text{if $a=0$}, \\
        \dfrac{a}{|a|}\cdot b & \text{if $a\neq 0$}.
    \end{cases}
\end{array}
\ed
Then $(\mathbb{C},+,*)$ and $(\mathbb{C},+,\circ)$ are planar nearrings and each of $*$ and $\circ$ is left distributive over the other, i.e., for any $a,b,c\in \mathbb{C}$ we have
\bd
    \begin{array}{l}
        a*(b\circ c)=(a*b)\circ (a*c), \\
        a\circ (b*c)=(a\circ b)*(a\circ c). \\
    \end{array}
\ed
Hence $(\mathbb{C},+,*,\circ)$ is a double planar nearring with a circular component.
\end{example}
As circularity proves to be interesting in application (see for example \cite{B-04}), in \cite{C-92} and \cite{C-94}, Clay proposes a \lq\lq tentative\rq\rq\ of definition of interior point of a circle (when a double planar nearring has a circular component). Let $(N,+,\star,\circ)$ be a double planar nearring and suppose $(N,+,\circ)$ is circular with circles $N^*\circ a+b$. Then $c\in N$ is an interior point for the circle $N^*\circ a+b$ if:
\begin{enumerate}
    \item $c\not \in N^* \circ a+b$;
    \item every ray $N\star a+b$ from $c$ intersects the circle $N^*\circ a+b$.
\end{enumerate}
Applied to Example \ref{es1} this definition works exactly as expected, namely it gives the interior points of a circle in the Euclidean plane. In \cite{E-97} the author proves some properties of the interior points, but no special combinatorial structure seems to arise from this approach. Moreover, the definition is not very practical, not only because it requires a great amount of calculation, but especially because it is not peculiar to the circular nearring. In fact the definition requires that a circular nearring has a planar \lq\lq partner\rq\rq, and in general there exist many possible partners each giving a different set of interior points for a given circle, as we prove in the Example \ref{z61}.\\
Our aim is to find a definition of interior point for a circle inherent to the structure of circular planar nearring without considering a planar partner, and to study its related geometry.

\section{Preliminaries and notations}
\subsection{Circular planar nearrings}

\begin{definition}\label{1.3}
A \emph{(left) nearring} is an algebraic structure $(N,+,\cdot)$ on a nonempty set $N$ with two inner operations, $+$ and $\cdot$, such that:
\begin{enumerate}
    \item $(N,+)$ is a group;
    \item $(N,\cdot )$ is a semigroup;
    \item the left distributive law holds, i.e.,
        \bd
            \forall x,y,z\in N,\ \ \ x\cdot (y+z)=(x\cdot y)+(x\cdot z).
        \ed
\end{enumerate}
Moreover, if $(N\setminus \{0\},\cdot )$ is a group, then $(N,+,\cdot)$ is a \emph{(left) nearfield}.
\end{definition}
We remind now some definitions and simple properties of nearrings, for details see \cite{C-92}.\\
Let $(N,+,\cdot )$ be a nearring, then for any $x,y\in N$, $x\cdot(-y)=-(x\cdot y)$ and $x\cdot 0=0$; moreover if for any $x\in N$ we have $x\cdot 0=0\cdot x=0$, $N$ is said to be \emph{0-symmetric}.\\
We say that $a,b\in N$ are \emph{equivalent multipliers} if and only if for all $n\in N$, $a\cdot n=b\cdot n$. It is easy to see that to be equivalent multipliers is an equivalence relation and we denote it by $\equiv_m$. We have the following fundamental definition.
\begin{definition} \label{1.7}
A nearring $(N,+,\cdot )$ is said to be \emph{planar} if:
\begin{enumerate}
    \item $|N/\equiv_m|\geq3$;
    \item $\forall a,b,c\in N$, with $a\not\equiv_m b$, the equation
        \bd
            a\cdot x=b\cdot x+c
        \ed
        has a unique solution in $N$.
\end{enumerate}
\end{definition}
For planar nearrings we consider the set $A=\{n\in N\mid n\equiv_m~ 0~\}$, called \emph{annihilator} of $N$, and we write
\bd
    N^0=N\setminus \{0\}\ \ \ \text{and} \ \ N^*=N\setminus A.
\ed
It is well known that planar nearrings are 0-symmetric; moreover planar nearrings with identity are also planar nearfield. Vice-versa a finite nearfield with at least three elements is planar and 0-symmetric and its additive group is the additive group of a field.
\begin{definition}\label{1.11}
Let $(N,+)$ be a group. A subgroup of automorphisms $\Phi$, $\{1\}\neq\Phi<\mathrm{\Aut}N$, is said to be \emph{regular} if for any $\varphi\in \Phi \setminus \{1\}$, $\varphi$ is a fixed point free (f.p.f.) automorphism, i.e., $\varphi(x)=x\Leftrightarrow x=0$. Moreover, if for any $\varphi \in \Phi \setminus \{1\}$, $-\varphi +1$ is surjective, the pair $(N,\Phi)$ is called a \emph{Ferrero pair}. Then an \emph{orbit} of $\Phi$ is the set
\bd
    \Phi(a)=\{\varphi(a) \mid \varphi \in \Phi\},
\ed
for some $a\in N$ ($\Phi(0)=\{0\}$ is the trivial orbit).
\end{definition}
The concept of Ferrero pair is central in this framework since every planar nearring can be constructed from such pair by the so called \emph{Ferrero Planar Nearring Factory} (for details, see \cite{C-92},\S4.1). Moreover we remember that, even if non-isomorphic nearrings can be generated by the same pair, the knowledge of the generating pair suffices in the study of the geometrical properties of the nearring.
\begin{theorem}[Ferrero, Clay] \label{1.13}
Let $(N,\Phi)$ be a finite Ferrero pair. Then
\begin{enumerate}
    \item $\forall a\in N^0,\ |\Phi(a)|=|\Phi|$;
    \item $\forall a\in N$, $\forall b\in \Phi(a)$, $\Phi(a)=\Phi(b)$;
    \item $\{\Phi(a)\mid a\in N\}$ is a partition of $N$;
    \item $|\Phi|$ divides $|N|-1$.
\end{enumerate}
\end{theorem}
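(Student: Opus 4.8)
The plan is to view $\Phi$ as acting on $N$ through $\varphi\cdot a=\varphi(a)$. This is a genuine action of the group $\Phi$: the identity automorphism $1\in\Phi$ acts trivially, and composition in $\Phi$ matches composition of maps. Since every $\varphi\in\Phi$ is an automorphism of $(N,+)$, it fixes $0$ and permutes $N^0$, so the action restricts to $N^0$ as well, and $\Phi(a)$ is precisely the orbit of $a$. Part (1) is then the orbit--stabilizer theorem together with the regularity hypothesis: for $a\in N^0$ the stabilizer $\Phi_a=\{\varphi\in\Phi\mid\varphi(a)=a\}$ is trivial, because if $\varphi\in\Phi_a$ and $\varphi\neq1$ then $\varphi$ is fixed point free and $\varphi(a)=a$ would force $a=0$; hence $|\Phi(a)|=[\Phi:\Phi_a]=|\Phi|$. (Note that the extra Ferrero-pair requirement that $-\varphi+1$ be surjective is not used here --- in the finite case it is automatic --- so (1)--(4) really rest only on $\Phi$ being regular.)

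Parts (2) and (3) are the general fact that the orbits of a group action coincide for points in the same orbit and partition the underlying set; I would verify them by hand. For (2): if $b\in\Phi(a)$, say $b=\varphi(a)$, then $\Phi(b)=\{\psi(\varphi(a))\mid\psi\in\Phi\}=\{\chi(a)\mid\chi\in\Phi\}=\Phi(a)$, using that $\psi\mapsto\psi\varphi$ is a bijection of $\Phi$. For (3): each $a\in N$ lies in $\Phi(a)$ because $a=1(a)$, so the orbits cover $N$; and if $c\in\Phi(a)\cap\Phi(b)$ then (2) gives $\Phi(a)=\Phi(c)=\Phi(b)$, so two orbits that meet are equal.

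Finally, (4) is a counting corollary of (1) and (3). By (3) the set $N$ is the disjoint union of the trivial orbit $\Phi(0)=\{0\}$, of size $1$, and of the non-trivial orbits $\Phi(a)$ with $a\in N^0$, each of size $|\Phi|$ by (1); if $k$ denotes the number of the latter then $|N|=1+k|\Phi|$, so $|\Phi|$ divides $|N|-1$. There is no genuine obstacle in the argument --- it is pure elementary group-action theory --- the only point to get right is the observation that fixed-point-freeness is exactly what trivializes the stabilizers of non-zero elements, which is what forces every non-trivial orbit to have the common size $|\Phi|$.
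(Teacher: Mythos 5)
Your proof is correct, and it is the standard argument: the paper itself states this theorem without proof (it is quoted from the literature, with Clay's book cited for details), so there is nothing to compare against line by line, but the orbit--stabilizer route you take --- trivial stabilizers of nonzero points from fixed-point-freeness, orbits partitioning $N$, and the count $|N|=1+k|\Phi|$ --- is exactly how this result is established in the cited sources. Your parenthetical observation is also accurate: in the finite case $-\varphi+1$ is injective (if $-\varphi(x)+x=-\varphi(y)+y$ then $\varphi(y-x)=y-x$, forcing $y=x$) and hence surjective, so regularity alone suffices for all four parts.
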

\begin{definition} \label{1.15}
A planar nearring is said to be \emph{field-generated} if it is generated from a Ferrero pair $(N,\Phi)$, where $(N,+,\cdot )$ is a field and $\Phi$ is isomorphic to a subgroup of $(N^{0},\cdot )$.
\end{definition}
It follows immediately that a field-generated planar nearring has a very simple structure, in fact every non trivial orbit is isomorphic to a multiplicative subgroup of a field.\\
Now, on a planar nearring $(N,+,\cdot )$ we consider the incidence structure $(N,\mathcal{B}^*,\in )$ where $\mathcal{B}^*\subset \mathcal{P}(N)$ is  defined by
\bd
    \mathcal{B}^*=\{N^*\cdot a+b\mid a,b\in N,\ a\neq 0\}.
\ed
As usual in this contest, we call the elements of $N$ points and those of $\mathcal{B}^*$ blocks. It is immediate to observe that the structure $(N,\mathcal{B}^*,\in )$ depends only on the pair $(N,\Phi)$ in the sense that if $(N,+,*)$ and $(N,+,\circ )$ are two planar nearrings constructed from the same Ferrero pair, then they yield identical $(N,\mathcal{B}^*,\in )$; this allows the following definition.
\begin{definition} \label{1.17}
A planar nearring $(N,+,\cdot )$, or the pair $(N,\Phi)$ from which is generated, or the incidence structure $(N,\mathcal{B}^*,\in )$ yielded, is said \emph{circular} if every three distinct points of $N$ belong to at most one block of $\mathcal{B}^*$, and if every two distinct points belong to at least two distinct blocks. In this case the block $N^*\cdot a+b=\Phi(a)+b$ is called \emph{circle} with \emph{center} $b$ and \emph{radius} $a$ (see \cite{C-92},\S 5.1).
\end{definition}
\begin{definition} An algebraic structure  $(N,+,\ast, \circ )$ is a \emph{(left) double  planar nearring} if each of $(N,+,\ast)$ and $(N,+, \circ )$ is a
(left) planar nearring, and each of $\ast$ and $\circ$ is left distributive over the other.
\end{definition}
\begin{example} \label{z61}
Let consider the pairs $(\mathbb{Z}_{61},\Phi)$, $(\mathbb{Z}_{61},\Gamma)$ and $(\mathbb{Z}_{61},\Sigma)$, where $\Phi$, $\Gamma$ and $\Sigma$ are isomorphic to the multiplicative subgroups of $(\mathbb{Z}_{61}^0,\cdot)$ generated by $11$, $9$ and $13$, respectively. Each pair is circular  and moreover if $(\mathbb{Z}_{61},+,*_{\Phi})$, $(\mathbb{Z}_{61},+,*_{\Gamma})$ and $(\mathbb{Z}_{61},+,*_{\Sigma})$ are the nearrings they yield, then it is easy to prove that $(\mathbb{Z}_{61},+,*_{\Phi},*_{\Gamma})$ and $(\mathbb{Z}_{61},+,*_{\Phi},*_{\Sigma})$ are double planar nearrings. If we try to construct the set of interior points for the circle $\Phi(1)$ using the definition proposed in \cite{C-92}, for $(\mathbb{Z}_{61},+,*_{\Phi},*_{\Gamma}) $ we obtain the union of the orbits on $0,4,5,7,9,10,13,19,20$, while for $(\mathbb{Z}_{61},+,*_{\Phi},*_{\Sigma})$ we obtain the orbits on $0,3,4,5,7,8,13,14,15,19,20,25$.

\end{example}

\subsection{BIBDs}

\begin{definition}\label{1.18}
Let $(X,\mathcal{B},\in )$ be an incidence structure with $|X|=v$ and $|\mathcal{B}|=b$. If there exist two integers $k$ and $r$ so that for any $B\in \mathcal{B}$, $|B|=k$, and every $x\in X$ belongs to exactly $r$ distinct blocks $B_1,...,B_r\in \mathcal{B}$, then $(X,\mathcal{B},\in )$ is a \emph{tactical configuration} with parameters $v$, $b$, $k$, $r$. Moreover if an integer $\lambda$ exists so that every pair of points belongs to exactly $\lambda$ distinct blocks, then $(X,\mathcal{B},\in )$ is a \emph{balanced incomplete block design (BIBD)} with parameters $v$, $b$, $k$, $r$, $\lambda$. If $a,b\in X$, we use the notation $\means{a,b}$ to denote the number of blocks which the pair $\{a,b\}$ belongs to.
\end{definition}
\begin{proposition}[\cite{C-92},\S5] \label{1.19}
Let $(X,\mathcal{B},\in )$ be a BIBD with parameters $v$, $b$, $k$, $r$, $\lambda$. Then $vr=bk$ and $\lambda (v-1)=r(k-1)$.\\
\noindent If $(N,+,\cdot )$ is a finite planar nearring, then $(N,\mathcal{B}^*,\in )$ is a BIBD with parameters $v=|N|$, $k=|N^*/\equiv_m|$, $b=v(v-1)/k$, $r=v-1$, $\lambda=k-1$.
Moreover if the BIBD is circular, then $k\leq (3+\sqrt{4v-7})/2$ and this limit is effective.
\end{proposition}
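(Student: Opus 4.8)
The statement falls into three essentially independent pieces, and the plan is to treat them in order. The two numerical identities come from a double count of incidences. To obtain $vr=bk$, count the flags, i.e. the pairs $(x,B)$ with $x\in B$, first by summing over points ($v$ points, each in $r$ blocks) and then by summing over blocks ($b$ blocks, each with $k$ points). To obtain $\lambda(v-1)=r(k-1)$, fix a point $x$ and count the pairs $(y,B)$ with $y\neq x$ and $x,y\in B$: summing over the $v-1$ choices of $y$ gives $\sum_{y\neq x}\means{x,y}=(v-1)\lambda$, while summing over the $r$ blocks through $x$, each contributing its $k-1$ points other than $x$, gives $r(k-1)$.

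For the second assertion the plan is to read the parameters off the Ferrero‑pair description of the blocks. By Definition~\ref{1.17} every block of $\mathcal{B}^*$ has the form $N^*\cdot a+b=\Phi(a)+b$ with $a\neq 0$, so by Theorem~\ref{1.13}(1) it has exactly $|\Phi|$ points; since, through the Ferrero Planar Nearring Factory, $|\Phi|=|N^*/\equiv_m|=k$, the structure is $k$‑uniform. The substantive step is to show that every pair of distinct points $x\neq y$ lies in exactly $k-1$ blocks. I would count the triples $(a,\varphi,\psi)$ with $a\in N^0$, $\varphi\neq\psi$ in $\Phi$, and
\[
  \varphi(a)-\psi(a)=x-y ,
\]
sending such a triple to the block $\Phi(a)+(x-\varphi(a))$, which contains both $x$ and $y$, and conversely noting every block through $\{x,y\}$ arises this way. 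Rewriting the equation as $(\psi^{-1}\varphi)(a)-a=\psi^{-1}(x-y)$ and using that $z\mapsto\theta(z)-z$ is a bijection of $N$ for every $\theta\in\Phi\setminus\{1\}$ (which is exactly the Ferrero condition), the element $a$ is uniquely determined by $(\varphi,\psi)$, so there are precisely $|\Phi|(|\Phi|-1)$ such triples; on the other hand, since $\Phi$ is fixed‑point‑free on $N^0$, a fixed block $\Phi(a_0)+b_0$ through $\{x,y\}$ is hit by exactly one triple for each of the $|\Phi|$ elements of its underlying orbit, so the map is $|\Phi|$‑to‑one. Hence $\means{x,y}=|\Phi|-1=k-1$, the structure is a BIBD with $v=|N|$, $k=|\Phi|$, $\lambda=k-1$, and $r=v-1$, $b=v(v-1)/k$ are then forced by the two identities above.

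For the circular bound the plan is a single double count of $3$‑subsets of $N$. Assume the structure is circular, so no three distinct points lie in two blocks. Each of the $b$ blocks contains $\binom{k}{3}$ triples, and by circularity distinct blocks contribute disjoint families of triples, whence $b\binom{k}{3}\leq\binom{v}{3}$. Substituting $b=v(v-1)/k$ and cancelling the common factor $v(v-1)$ leaves $(k-1)(k-2)\leq v-2$, i.e. $k^2-3k+4-v\leq 0$, and solving this quadratic inequality in $k$ gives $k\leq(3+\sqrt{4v-7})/2$. For effectiveness one exhibits circular planar nearrings meeting the bound; for instance the Ferrero pair $(\mathbb{Z}_2\times\mathbb{Z}_2,\Phi)$ with $|\Phi|=3$ gives $v=4$, $k=3$ with all four $3$‑subsets of $N$ as circles, and the examples in \cite{C-92} show the estimate cannot be improved as a bound in $v$ alone.

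The double counts of the first and third parts are routine. The delicate point, where the hypotheses on $\Phi$ (regularity, fixed‑point‑freeness, and surjectivity of $-\varphi+1$) are genuinely used, is the constancy of $\means{x,y}$: one must show the number of blocks through a pair equals $|\Phi|-1$ for \emph{every} pair, and in particular that the fibres of the counting map above all have size exactly $|\Phi|$; this is the heart of the argument that $(N,\mathcal{B}^*,\in)$ is a design at all, as opposed to merely a $k$‑uniform incidence structure.
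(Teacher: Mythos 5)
The paper offers no proof of this proposition at all: it is quoted verbatim from Clay (\cite{C-92}, \S 5) as background, so there is nothing in the text to compare your argument against line by line. Judged on its own, your proof is correct and self-contained. The two counting identities and the bound $b\binom{k}{3}\le\binom{v}{3}\Rightarrow (k-1)(k-2)\le v-2\Rightarrow k\le(3+\sqrt{4v-7})/2$ are the standard arguments and are carried out correctly. The substantive part, $\means{x,y}=|\Phi|-1$, is also sound: the equation $(\psi^{-1}\varphi)(a)-a=\psi^{-1}(x-y)$ has a unique solution $a$ because $z\mapsto\theta(z)-z$ is injective for f.p.f.\ $\theta$ (hence bijective on finite $N$), giving $|\Phi|(|\Phi|-1)$ triples, and the fibre count is right. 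One small point you use silently: the claim that a block $\Phi(a_0)+b_0$ is hit by \emph{exactly} one triple per element of $\Phi(a_0)$ needs the fact that $\Phi(a)+b=\Phi(a)+b'$ forces $b=b'$ for $a\neq 0$ (otherwise two distinct $\varphi$'s for the same $a$ could land on the same block); this is a standard lemma for Ferrero pairs, again a consequence of fixed-point-freeness, but it should be stated since it is exactly what makes the fibres have size $|\Phi|$ rather than larger. With that noted, your derivation of $\lambda=k-1$, and hence $r=v-1$ and $b=v(v-1)/k$ from the two identities, is complete; the effectiveness of the bound is, as you say, an appeal to explicit examples and is reasonably delegated to \cite{C-92}.
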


\subsection{The family of circles $E^r_c$}

\begin{definition} \label{1.23}
Let $(N,\Phi)$ be a finite circular Ferrero pair, then for every $r,c\in N^0$ define the \emph{family of circles $E^r_c$} by
\bd
    E^r_c=\{\Phi(r)+b\mid b\in \Phi(c)\}.
\ed
Obviously $|E^r_c|=|\Phi(c)|$ (remember that $E^r_c\subset \mathcal{P}(N)$).
\end{definition}
We give now without proof some well known properties of the family of circles, for details see (\cite{K-92},\S 4) and (\cite{C-92},\S 6).
\begin{lemma} \label{1.24}
Let $r,r',c,c'\in N^0$. Then $E^r_c=E^{r'}_{c'}$ if and only if $\Phi(r)=\Phi(r')$ and $\Phi(c)=\Phi(c')$.
\end{lemma}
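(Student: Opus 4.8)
The backward direction needs no argument: if $\Phi(r)=\Phi(r')$ and $\Phi(c)=\Phi(c')$, then the two sets of circles defining $E^r_c$ and $E^{r'}_{c'}$ are literally the same set. So the whole content is the forward direction, and my plan is to reduce it to one structural fact: \emph{in a finite Ferrero pair a circle determines its center and its radius orbit uniquely}, i.e.\ if $\Phi(a)+b=\Phi(a')+b'$ with $a,a'\in N^{0}$, then $\Phi(a)=\Phi(a')$ and $b=b'$. Granting this, the forward direction is immediate: the assignment $b\mapsto\Phi(r)+b$ identifies $\Phi(c)$ with $E^r_c$ in such a way that $b$ is recovered as the (now well-defined) center of the circle $\Phi(r)+b$, and similarly $\Phi(c')$ is identified with $E^{r'}_{c'}$; from $E^r_c=E^{r'}_{c'}$ one then reads off the common set of centers, giving $\Phi(c)=\Phi(c')$, and the common radius orbit (constant along either family), giving $\Phi(r)=\Phi(r')$.

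To establish the structural fact I would argue in two steps. \emph{Step 1: no orbit is stabilised by a nonzero translation.} If $\Phi(a)+e=\Phi(a)$ with $e\neq0$, then the cyclic subgroup $\langle e\rangle\leq(N,+)$ permutes the finite set $\Phi(a)$ by translation, and this action is free (translation by a nonzero group element has no fixed point), so $\mathrm{ord}(e)$ divides $|\Phi(a)|=|\Phi|$; since $\mathrm{ord}(e)$ also divides $|N|$ while $|\Phi|\mid|N|-1$ by Theorem \ref{1.13}(4), we get $\mathrm{ord}(e)\mid\gcd(|\Phi|,|N|)=1$, forcing $e=0$. \emph{Step 2: the general case.} Given $\Phi(a)+b=\Phi(a')+b'$, rewrite it as $\Phi(a)=\Phi(a')+d$ with $d=b'-b$. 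Each $\varphi\in\Phi$ is additive and fixes every orbit setwise, so applying $\varphi$ to this identity gives $\Phi(a)=\Phi(a')+\varphi(d)$; comparing with $\Phi(a)=\Phi(a')+d$ shows that translation by $\varphi(d)-d$ stabilises $\Phi(a')$, hence $\varphi(d)=d$ for every $\varphi\in\Phi$ by Step 1, and therefore $d=0$ because $\Phi$ is nontrivial and fixed-point-free. Thus $b=b'$ and $\Phi(a)=\Phi(a')$.

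The only real obstacle is this structural fact, and within it Step 2 — excluding the a priori possibility that a nontrivial translate of an orbit is again an orbit. What makes it go through is the coprimality $\gcd(|\Phi|,|N|)=1$ (a consequence of Theorem \ref{1.13}(4)) together with the $\Phi$-equivariance of orbits and translations and the fixed-point-free hypothesis; note that circularity itself is not needed for this lemma. If one prefers to keep the argument short, the structural fact may simply be quoted, since it is exactly the standard justification for the terminology ``center'' and ``radius'' in Definition \ref{1.17} (see \cite{C-92}, \S5).
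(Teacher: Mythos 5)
Your proof is correct. Note, however, that the paper itself offers no proof of this lemma: it is stated as one of several ``well known properties'' quoted without proof from Ke's thesis and Clay's book, so there is no internal argument to compare yours against. What you supply is a complete, self-contained derivation, and it is sound: the reduction to the structural fact that $\Phi(a)+b=\Phi(a')+b'$ (with $a,a'\in N^0$) forces $b=b'$ and $\Phi(a)=\Phi(a')$ is the right move, and both steps check out --- in Step 1 the free translation action of $\langle e\rangle$ on $\Phi(a)$ gives $\mathrm{ord}(e)\mid|\Phi|$, while Lagrange gives $\mathrm{ord}(e)\mid|N|$, and Theorem \ref{1.13}(4) makes these coprime; in Step 2 the $\Phi$-equivariance argument correctly forces $\varphi(d)=d$ for all $\varphi$, whence $d=0$ by fixed-point-freeness and nontriviality of $\Phi$. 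Your observation that circularity plays no role here is also accurate; only finiteness and the Ferrero-pair axioms are used. If anything, you could shorten the final reduction: once the structural fact is in hand, any single circle lying in both families immediately yields $\Phi(r)=\Phi(r')$ and a common center $b\in\Phi(c)\cap\Phi(c')$, and the partition property of orbits (Theorem \ref{1.13}) then gives $\Phi(c)=\Phi(c')$ without needing to discuss the full bijections $b\mapsto\Phi(r)+b$.
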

\begin{lemma}\label{1.25}
Let $A=\Phi(r)+a$ and $B=\Phi(r)+b$ be in $E^r_c$. Then there exists $\varphi\in \Phi$ such that $\varphi(A)=B$ and $\varphi(a)=b$. In particular if $\varphi\in \Phi$ exists such that $\varphi(a)=b$, then $\varphi(\Phi(r)+a)=\Phi(r)+b$ and so for any $\varphi\in \Phi$, we have $\varphi(E^r_c)=E^r_c$.
\end{lemma}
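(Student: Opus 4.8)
The plan is to reduce the whole lemma to two elementary facts: every $\varphi\in\Phi$ is an \emph{additive} automorphism of $N$, and the orbits of $\Phi$ behave well under the group $\Phi$ itself (Theorem \ref{1.13}, especially part (2): $b\in\Phi(c)$ forces $\Phi(b)=\Phi(c)$, and $\varphi(\Phi(r))=\Phi(r)$ for every $\varphi\in\Phi$ since $\varphi\Phi=\Phi$).

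First I would produce the automorphism. Since $A=\Phi(r)+a$ and $B=\Phi(r)+b$ belong to $E^r_c$, by Definition \ref{1.23} we have $a,b\in\Phi(c)$, hence $\Phi(a)=\Phi(c)=\Phi(b)$ by Theorem \ref{1.13}(2); in particular $b\in\Phi(a)$, so there is $\varphi\in\Phi$ with $\varphi(a)=b$. Next I would verify the ``in particular'' clause, which needs no hypothesis linking $a,b$ to $c$: for any $\varphi\in\Phi$ with $\varphi(a)=b$, additivity of $\varphi$ gives $\varphi(\Phi(r)+a)=\varphi(\Phi(r))+\varphi(a)$, and $\varphi(\Phi(r))=\{(\varphi\circ\psi)(r)\mid\psi\in\Phi\}=\Phi(r)$ because left multiplication by $\varphi$ permutes $\Phi$. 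Combining, $\varphi(\Phi(r)+a)=\Phi(r)+b$. Applying this to the $\varphi$ found in the previous sentence yields $\varphi(A)=B$ together with $\varphi(a)=b$, which is the first assertion.

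Finally, for $\varphi(E^r_c)=E^r_c$ with an arbitrary $\varphi\in\Phi$: a generic element of $E^r_c$ is $\Phi(r)+d$ with $d\in\Phi(c)$, and the computation just made shows $\varphi(\Phi(r)+d)=\Phi(r)+\varphi(d)$ with $\varphi(d)\in\Phi(d)=\Phi(c)$ (Theorem \ref{1.13}(2) again), so $\varphi(\Phi(r)+d)\in E^r_c$. Thus $\varphi(E^r_c)\subseteq E^r_c$, and since $\varphi^{-1}\in\Phi$ as well (or simply because $\varphi$ is injective and $E^r_c$ is finite) we get equality.

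I do not anticipate a real obstacle; the only step deserving a moment's attention is the identity $\varphi(\Phi(r))=\Phi(r)$, that is, the invariance of an orbit under each element of the acting group — this is precisely where the \emph{group} structure of $\Phi$ is used, rather than merely its regularity.
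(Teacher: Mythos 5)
Your proof is correct. Note that the paper itself states Lemma \ref{1.25} without proof (it is cited as a known fact from Ke's thesis and Clay's book), so there is nothing to compare against; your argument --- read off $b\in\Phi(a)$ from $a,b\in\Phi(c)$ via Theorem \ref{1.13}(2), then use that each $\varphi$ is an additive automorphism with $\varphi(\Phi(r))=\Phi(r)$ --- is exactly the standard one. The only point worth making explicit is that ``$A=\Phi(r)+a$ in $E^r_c$'' is being read as ``$a\in\Phi(c)$'', i.e.\ that the displayed representative is the one furnished by Definition \ref{1.23}; with that (intended) reading every step goes through.
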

\begin{theorem} \label{1.26}
Let $A\in E^r_c$ intersect in only one point (i.e. it's tangent) exactly $m$ circles of $E^r_c$, and intersect exactly $n$ circles of $E^r_c$, each in two points. Then every $B\in E^r_c$ has this property. Moreover, if $A,B\in E^r_c$ and $r'\in N^0$, then $|A\cap \Phi(r')|=|B\cap \Phi(r')|$.
\end{theorem}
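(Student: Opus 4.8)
The plan is to exploit the transitivity built into Lemma~\ref{1.25}: for any two circles $A,B\in E^r_c$ there is an automorphism $\varphi\in\Phi$ with $\varphi(A)=B$, and by the same lemma $\varphi(E^r_c)=E^r_c$. Since $\varphi$ is in particular a bijection of $N$, the induced map $C\mapsto\varphi(C)$ is a bijection of the \emph{set} $E^r_c$ onto itself (injective because $\varphi$ is injective on $N$, surjective because $\varphi(E^r_c)=E^r_c$), and it sends $A$ to $B$; hence it restricts to a bijection of $E^r_c\setminus\{A\}$ onto $E^r_c\setminus\{B\}$.

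First I would record that, by circularity (Definition~\ref{1.17}), two distinct circles have at most two common points, so each $C\in E^r_c\setminus\{A\}$ meets $A$ in $0$, $1$ or $2$ points; thus the $m$ tangent circles, the $n$ secant circles and the circles disjoint from $A$ together exhaust $E^r_c\setminus\{A\}$. Now for any $C\in E^r_c$ one has $\varphi(C\cap A)=\varphi(C)\cap\varphi(A)=\varphi(C)\cap B$ (equality of images holds since $\varphi$ is injective), and therefore $|C\cap A|=|\varphi(C)\cap B|$. Consequently $C$ is tangent to (resp.\ secant with, resp.\ disjoint from) $A$ if and only if $\varphi(C)$ is tangent to (resp.\ secant with, resp.\ disjoint from) $B$. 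Combining this with the bijection $E^r_c\setminus\{A\}\to E^r_c\setminus\{B\}$ above, the number of circles of $E^r_c$ tangent to $B$ equals $m$ and the number secant with $B$ equals $n$, which is the first assertion.

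For the second assertion, fix $r'\in N^0$ and note that $\varphi(\Phi(r'))=\Phi(r')$: indeed $\varphi(\Phi(r'))=\{(\varphi\psi)(r')\mid\psi\in\Phi\}=\Phi(r')$ because $\varphi\Phi=\Phi$. Hence, using once more that $\varphi$ is a bijection of $N$ with $\varphi(A)=B$,
\bd
    |B\cap\Phi(r')|=|\varphi(A)\cap\varphi(\Phi(r'))|=|\varphi\bigl(A\cap\Phi(r')\bigr)|=|A\cap\Phi(r')|,
\ed
as claimed.

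There is no substantial obstacle here; the whole argument is a symmetry computation. The only points that need care are (i) invoking circularity so that the trichotomy tangent/secant/disjoint is exhaustive for the circles other than $A$, and (ii) checking that the circle map induced by $\varphi$ is genuinely a bijection of $E^r_c$ carrying the block $\{A\}$ onto $\{B\}$ — both immediate from Lemma~\ref{1.25} and from $\varphi$ being an automorphism of $(N,+)$. One could also streamline the write-up by noting that $\varphi$ fixes $\Phi(r)$ and $\Phi(c)$ setwise, so its action on $E^r_c$ is just $\Phi(r)+b\mapsto\Phi(r)+\varphi(b)$, but this is not needed for the proof.
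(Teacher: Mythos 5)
Your argument is correct: the automorphism $\varphi\in\Phi$ furnished by Lemma~\ref{1.25} induces a bijection of $E^r_c$ sending $A$ to $B$, preserves intersection cardinalities because it is injective on $N$, and fixes every orbit $\Phi(r')$ setwise, which yields both assertions. The paper itself states this theorem without proof, deferring to \cite{K-92} and \cite{C-92}, and the symmetry argument you give is exactly the one those sources use and the one Lemma~\ref{1.25} is set up to enable, so there is nothing to add.
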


\section{Disks}

\begin{definition} \label{2.1}
Let $(N,\Phi)$ be a finite circular Ferrero pair and $\Phi(a)+b\in \mathcal{B}^*$,
then we define $\mathcal{D}(a;b)$, the \emph{disk of center $b$ and radius $a$}, as
\bd
    \mathcal{D}(a;b)=\{x\in \Phi(r)+c\mid r\neq 0,\ b\in \Phi(r)+c,\ |(\Phi(r)+c)\cap (\Phi(a)+b)|=1\} .
\ed
Now it is obvious to define the \emph{interior part of the circle} $\Phi(a)+b$, $\mathcal{I}(\Phi(a)+b)$, as
\bd
    \mathcal{I}(\Phi(a)+b)=\mathcal{D}(a;b)\setminus (\Phi(a)+b).
\ed
\end{definition}
This definition is coherent with what we know by Euclidean geometry, in particular the idea is to reconstruct the interior of a circle joining together all the circles tangent to it and containing its center, and this forces these circles to be  somehow internal at the given one. Moreover the definition is also inherent to the fixed nearring, in the sense that it does not require another structure to be used, and this is of course an improvement towards Clay's definition (\cite{C-92}, Def.(7.115)). Now we want to study the geometrical structure of the disks and in particular to give, at least in the more interesting cases, a very fast way to construct them and to check the membership of a point.

\subsection{Geometrical considerations}

First we prove that under some  enough general conditions, the definition of interior part is stable under translations and dilatations, and then we
investigate the cases in which this interior part is surely nonempty.\\
To simplify the notations, we write for short $\sqcup E^r_c$ to mean the set of points belonging to the circles of $E^r_c$, i.e.
\[
    \sqcup E^r_c=\{x\in \Phi(r)+b\mid \Phi(r)+b\in E^r_c\}.
\]
\begin{lemma} \label{2.3}
Let $(N,\Phi)$ be a circular Ferrero pair and $a,b\in N$ with $a\neq 0$, then
\[
    \mathcal{D}(a;b)=\mathcal{D}(a;0)+b
\]
and
\[
    \mathcal{I}(\Phi(a)+b)=\mathcal{I}(\Phi(a))+b,
\]
that is, the definitions of disk and of interior part are homogeneous respect to translation.
\end{lemma}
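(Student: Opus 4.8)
The plan is to exploit that, for fixed $b\in N$, the right translation $\tau_b\colon N\to N$, $\tau_b(x)=x+b$, is a bijection of $N$ which maps the family of all circles onto itself. Indeed, for any $r\in N^0$ and $c\in N$ one has $(\Phi(r)+c)+b=\Phi(r)+(c+b)$, so $\tau_b$ sends the circle $\Phi(r)+c$ to the circle $\Phi(r)+(c+b)$; in particular $\tau_b(\Phi(a))=\Phi(a)+b$. (Since we only ever translate on the right, it is irrelevant whether $(N,+)$ is abelian.) Being injective, $\tau_b$ preserves incidence, $p\in C\iff\tau_b(p)\in\tau_b(C)$, and cardinalities of intersections, hence tangency: $|C\cap\Phi(a)|=1$ if and only if $|\tau_b(C)\cap(\Phi(a)+b)|=1$.

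First I would prove the inclusion $\mathcal{D}(a;0)+b\subseteq\mathcal{D}(a;b)$. Let $x\in\mathcal{D}(a;0)$, so there is a circle $C=\Phi(r)+c$ with $r\neq0$, $0\in C$, $|C\cap\Phi(a)|=1$ and $x\in C$. Applying $\tau_b$, the circle $\tau_b(C)$ contains $b=\tau_b(0)$, satisfies $|\tau_b(C)\cap(\Phi(a)+b)|=1$ by the remark above, and contains $x+b=\tau_b(x)$; hence $x+b\in\mathcal{D}(a;b)$.

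For the opposite inclusion I would run exactly the same argument with $b$ replaced by $-b$ and the circle $\Phi(a)+b$ playing the role of $\Phi(a)$, using that $(\Phi(a)+b)+(-b)=\Phi(a)$; this gives $\mathcal{D}(a;b)+(-b)\subseteq\mathcal{D}(a;0)$, i.e. $\mathcal{D}(a;b)\subseteq\mathcal{D}(a;0)+b$, and the first identity follows. The statement for the interior part is then immediate: since $\tau_b$ is a bijection it commutes with set difference, so
\[
\mathcal{I}(\Phi(a)+b)=\mathcal{D}(a;b)\setminus(\Phi(a)+b)=\tau_b\bigl(\mathcal{D}(a;0)\bigr)\setminus\tau_b\bigl(\Phi(a)\bigr)=\bigl(\mathcal{D}(a;0)\setminus\Phi(a)\bigr)+b=\mathcal{I}(\Phi(a))+b.
\]

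There is essentially no serious obstacle here; the only point deserving a line of care is the verification that right translation permutes the circles and preserves the two defining conditions (passing through the centre, and being tangent to $\Phi(a)+b$). Everything else is a formal unwinding of the definitions.
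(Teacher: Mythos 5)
Your proposal is correct and is essentially the paper's own argument: the paper proves the first identity by the same formal unwinding of the definition under the substitution $\overline{c}=c+b$, which is exactly your observation that right translation permutes the circles and preserves incidence and tangency, and it derives the statement for the interior part from the fact that translation commutes with set difference, just as you do. The only cosmetic difference is that you package the computation as two inclusions via the map $\tau_b$ and $\tau_{-b}$, while the paper writes it as a single chain of set equalities.
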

\begin{proof}
It is enough to observe, remembering that we always consider $r\neq 0$, that
\[
    \begin{array}{ll}
        \mathcal{D}(a;0)+b=\\
        =\{x\in \Phi(r)+c\mid 0\in \Phi(r)+c,\ |(\Phi(r)+c)\cap (\Phi(a))|=1\}+b=\\
        =\{x\in \Phi(r)+\overline{c}-b\mid 0\in \Phi(r)+\overline{c}-b,\ |(\Phi(r)+\overline{c}-b)\cap (\Phi(a))|=1\}+b=\\
        =\{x+b\in \Phi(r)+\overline{c}\mid b\in \Phi(r)+\overline{c},\ |(\Phi(r)+\overline{c})\cap (\Phi(a)+b)|=1\}+b=\\
        =\{x\in \Phi(r)+\overline{c}\mid b\in \Phi(r)+\overline{c},\ |(\Phi(r)+\overline{c})\cap (\Phi(a)+b)|=1\}-b+b=\\
        =\mathcal{D}(a;b).
    \end{array}
\]
For the interior part, we have
\[
    \begin{split}
        \mathcal{I}(\Phi(a)+b)&=\mathcal{D}(a;b)\setminus (\Phi(a)+b)=(\mathcal{D}(a;0)+b)\setminus (\Phi(a)+b)=\\
        &=(\mathcal{D}(a;0)\setminus \Phi(a))+b=\mathcal{I}(\Phi(a))+b.
    \end{split}
\]
\end{proof}
\begin{lemma} \label{2.4}
Let $(N,\Phi)$ be a circular field-generated Ferrero pair and let $(N,+,\cdot)$ be the generating field. If $a\in N^0$, then
\[
    \mathcal{D}(a;0)=a\cdot \mathcal{D}(1;0)
\]
and
\[
    \mathcal{I}(\Phi(a))=a\cdot \mathcal{I}(\Phi(1)),
\]
that is, the definitions of disk and of interior part are homogeneous respect to
dilatation.
\end{lemma}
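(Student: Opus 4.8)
The plan is to exploit that in the field-generated case the map $m_a\colon x\mapsto a\cdot x$ is an additive-group automorphism of $(N,+)$ which also normalizes $\Phi$: since $\Phi$ is (isomorphic to) a multiplicative subgroup of $(N^0,\cdot)$, for every $\varphi\in\Phi$ we have $a\cdot\varphi(x)=\varphi(a\cdot x)$, so $m_a$ maps orbits to orbits, namely $a\cdot\Phi(r)=\Phi(a\cdot r)=\Phi(ar)$, and more generally $a\cdot(\Phi(r)+c)=\Phi(ar)+ac$. Thus $m_a$ sends circles to circles bijectively, preserves incidence, and preserves the cardinality of intersections of circles. The whole statement should then fall out by pushing the defining condition of $\mathcal{D}(1;0)$ through $m_a$.

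First I would record the normalization identity $a\cdot(\Phi(r)+c)=\Phi(ar)+ac$ and note that, because $a\neq 0$, $r\mapsto ar$ is a bijection on $N^0$ and $c\mapsto ac$ a bijection on $N$; hence $\{\,\Phi(r)+c : r\in N^0, c\in N\,\}$ is carried onto itself by $m_a$, and $0\in\Phi(r)+c$ iff $0=a\cdot 0\in a\cdot(\Phi(r)+c)=\Phi(ar)+ac$. Second, I would observe that $|(\Phi(r)+c)\cap\Phi(1)|=|a\cdot((\Phi(r)+c)\cap\Phi(1))|=|(\Phi(ar)+ac)\cap\Phi(a)|$ since $m_a$ is injective and $a\cdot\Phi(1)=\Phi(a)$. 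Putting these together, $x\in\mathcal{D}(1;0)$ — witnessed by a circle $\Phi(r)+c$ through $0$ tangent to $\Phi(1)$ — is equivalent to $a\cdot x\in\Phi(ar)+ac$ with $\Phi(ar)+ac$ a circle through $0$ tangent to $\Phi(a)$, i.e. to $a\cdot x\in\mathcal{D}(a;0)$; running the argument with $a^{-1}$ gives the reverse inclusion, so $a\cdot\mathcal{D}(1;0)=\mathcal{D}(a;0)$. For the interior part, since $a\cdot\Phi(1)=\Phi(a)$ and $m_a$ is a bijection, $a\cdot(\mathcal{D}(1;0)\setminus\Phi(1))=(a\cdot\mathcal{D}(1;0))\setminus(a\cdot\Phi(1))=\mathcal{D}(a;0)\setminus\Phi(a)$, which is exactly $\mathcal{I}(\Phi(a))=a\cdot\mathcal{I}(\Phi(1))$.

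The only genuinely delicate point is the normalization identity $a\cdot\varphi(x)=\varphi(a\cdot x)$, i.e. that left multiplication by a field element commutes with the automorphisms in $\Phi$; this is where Definition~\ref{1.15} is used in full force, as $\Phi$ being realized inside $(N^0,\cdot)$ means each $\varphi$ is itself multiplication by some $u\in N^0$, and commutativity of the field multiplication gives $a\cdot(u\cdot x)=u\cdot(a\cdot x)$. Everything else is bookkeeping: that $m_a$ is additive and bijective, that it therefore preserves the cosets $\Phi(r)+c$ as a family, membership $0\in\Phi(r)+c$, and intersection cardinalities. I do not expect any obstruction beyond keeping the indices straight, and the write-up can mirror the translation computation of Lemma~\ref{2.3} almost line for line, with $+b$ replaced by $a\cdot(-)$.
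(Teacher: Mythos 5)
Your proposal is correct and follows essentially the same route as the paper: the authors likewise exploit that multiplication by $a$ commutes with $\Phi$ (so $a\cdot(\Phi(r)+c)=\Phi(ar)+ac$), is bijective, and hence preserves incidence with $0$ and intersection cardinalities, then push the defining conditions of $\mathcal{D}(1;0)$ through this dilatation and deduce the statement for interior parts by taking set differences. No substantive difference.
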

\begin{proof}
Since $a$ acts as a f.p.f. automorphism, following the previous lemma, and observing that the multiplication symbol $\cdot$ in the field is always omitted, we have
\[
    \begin{array}{l}
        a\mathcal{D}(1;0)=\\
        =a\{x\in \Phi(r)+c\mid 0\in \Phi(r)+c,\ |(\Phi(r)+c)\cap \Phi(1)|=1\}=\\ =a\{x\in \Phi(r)+c\mid 0\in \Phi(r)+c,\ |(\Phi(a^{-1}ar)+a^{-1}ac)\cap \Phi(a^{-1}a)|=1\}=\\
        =a\{x\in \Phi(r)+c\mid 0\in \Phi(a^{-1}ar)+a^{-1}ac,\ |a^{-1}(\Phi(ar)+ac)\cap a^{-1}\Phi(a)|=1\}=\\
        =a\{x\in \Phi(a^{-1}ar)+a^{-1}ac\mid 0\in \Phi(ar)+ ac,\ |a^{-1}\{(\Phi(ar)+ac)\cap \Phi(a)\}|=1\}=\\
        =a\{x\in a^{-1}(\Phi(ar)+ac)\mid 0\in \Phi(ar)+ac,\ |(\Phi(ar)+ac)\cap \Phi(a)|=1\}=\\
        =\{ax\in \Phi(ar)+ac \mid 0\in \Phi(ar)+ ac,\ |(\Phi(ar)+ac)\cap\Phi(a)|=1\}=\\
        =\{y\in \Phi(\overline{r})+\overline{c} \mid 0\in \Phi(\overline{r})+ \overline{c},\ |(\Phi(\overline{r})+\overline{c})\cap \Phi(a)|=1\}=\\
        =\mathcal{D}(a;0),
    \end{array}
\]
and so the first equality. For the second we simply observe that
\[
    \begin{split}
        \mathcal{I}(\Phi(a))&=\mathcal{D}(a;0)\setminus \Phi(a)=a \mathcal{D}(1;0)\setminus a\Phi(1)=\\
        &=a(\mathcal{D}(1;0)\setminus \Phi(1))=a\mathcal{I}(\Phi(1)).
    \end{split}
\]
\end{proof}
By the two previous lemmas we immediately have the following corollary.
\begin{corollary}\label{2.5}
With the same hypothesis of the previous lemma, we have
\[
    \mathcal{D}(a;b)=a \cdot \mathcal{D}(1;0)+b=a\cdot \mathcal{D}(1;a^{-1}\cdot b)
\]
and
\[
    \mathcal{I}(\Phi(a)+b))=a\cdot \mathcal{I}(\Phi(1))+b=a\cdot \mathcal{I}(\Phi(1; a^{-1} \cdot b)).
\]
\end{corollary}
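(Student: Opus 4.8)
The plan is to deduce the statement directly by chaining Lemma~\ref{2.3} and Lemma~\ref{2.4}, since no new geometric input is needed; the only bookkeeping is that in the generating field multiplication by the fixed element $a\in N^0$ is an additive bijection, so it intertwines translations with rescaled translations. Concretely, the two set-level facts I will invoke are the distributivity $a\cdot(S+t)=a\cdot S+(a\cdot t)$ for any $S\subseteq N$ and $t\in N$ (immediate from left distributivity of field multiplication) and the cancellation $a\cdot(a^{-1}\cdot b)=b$.

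First I would treat the disk. By Lemma~\ref{2.3}, $\mathcal{D}(a;b)=\mathcal{D}(a;0)+b$; by Lemma~\ref{2.4}, $\mathcal{D}(a;0)=a\cdot\mathcal{D}(1;0)$. Substituting yields the first equality $\mathcal{D}(a;b)=a\cdot\mathcal{D}(1;0)+b$. For the second equality I would start from the right-hand side: applying Lemma~\ref{2.3} with radius $1$ and center $a^{-1}\cdot b$ gives $\mathcal{D}(1;a^{-1}\cdot b)=\mathcal{D}(1;0)+a^{-1}\cdot b$, and multiplying through by $a$, using distributivity and $a\cdot(a^{-1}\cdot b)=b$, gives $a\cdot\mathcal{D}(1;a^{-1}\cdot b)=a\cdot\mathcal{D}(1;0)+b$, which is exactly the middle expression.

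The interior part is handled identically: Lemma~\ref{2.3} gives $\mathcal{I}(\Phi(a)+b)=\mathcal{I}(\Phi(a))+b$, Lemma~\ref{2.4} gives $\mathcal{I}(\Phi(a))=a\cdot\mathcal{I}(\Phi(1))$, and the same computation as above with $\mathcal{I}(\Phi(1))$ in place of $\mathcal{D}(1;0)$ rewrites $a\cdot\mathcal{I}(\Phi(1))+b$ as $a\cdot(\mathcal{I}(\Phi(1))+a^{-1}\cdot b)=a\cdot\mathcal{I}(\Phi(1)+a^{-1}\cdot b)$, matching the final (lightly mistyped) term of the statement. I do not expect any genuine obstacle here — the corollary is a purely formal consequence of the two preceding lemmas, and the only step deserving a written line is the elementary identity $a\cdot(S+t)=a\cdot S+(a\cdot t)$ together with the cancellation just noted.
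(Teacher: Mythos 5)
Your proof is correct and is exactly the argument the paper intends: the corollary is stated as an immediate consequence of Lemma~\ref{2.3} and Lemma~\ref{2.4} (the paper gives no further proof), and your chaining of the two lemmas together with the identity $a\cdot(S+t)=a\cdot S+a\cdot t$ and the cancellation $a\cdot(a^{-1}\cdot b)=b$ is precisely the intended reasoning. Nothing further is needed.
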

We have just proved that there is a simple method to construct every disk, namely to construct the disk $\mathcal{D}(1;0)$, in particular in the following we will show how to further simplify this construction. Besides, we note that we can prove properties of disks that are kept for (field) multiplication and sum, only speaking about one disk; in particular it is now obvious that all disks have the same cardinality, that is what we will prove in Theorem \ref{card} under some assumptions.

\begin{theorem} \label{2.7}
Let $(N,\Phi)$ be a finite circular field-generated Ferrero pair and let $|\Phi|$ be even, then, for each $a,b\in N$, with $a\neq 0$,
\[
    \mathcal{D}(a;b)=\sqcup E^{2^{-1}a}_{2^{-1}a}+b.
\]
\end{theorem}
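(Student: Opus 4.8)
The plan is to reduce the statement to the case $a=1$, $b=0$ and then to identify $\mathcal{D}(1;0)$ with the union of exactly those circles that pass through $0$ and are tangent to the unit circle $\Phi(1)$, showing that these are precisely the members of $E^{2^{-1}}_{2^{-1}}$. First I observe that since $|\Phi|$ is even, the characteristic of $N$ is not $2$ (otherwise $|\Phi|$ would divide the odd number $|N|-1=2^k-1$), so $2$ is invertible and the cyclic group $(N^0,\cdot)$ has a unique involution $-1$, which must lie in $\Phi$; hence $-1\in\Phi$ and every orbit is stable under $x\mapsto -x$. A short computation gives $a\cdot(\sqcup E^{r}_{c})=\sqcup E^{ar}_{ac}$, so by Corollary \ref{2.5} it suffices to prove $\mathcal{D}(1;0)=\sqcup E^{2^{-1}}_{2^{-1}}$.

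Next I would record two structural facts. If a circle $\Phi(r)+c$ contains $0$, then $-c\in\Phi(r)$, so $\Phi(r)=\Phi(-c)=\Phi(c)$ and the circle equals $\Phi(c)+c$; conversely, since $-1\in\Phi$, the circle $\Phi(c)+c$ contains $0$ for every $c\in N^0$. Writing $\Phi$ for $\Phi(1)$, a point $(\phi+1)c$ of $\Phi(c)+c$ lies on $\Phi$ iff $\phi+1\in\Phi(c^{-1})$, and since $\phi\mapsto\phi+1$ is a bijection onto $\Phi+1$ we get $|(\Phi(c)+c)\cap\Phi(1)|=|\Phi(c^{-1})\cap(\Phi(1)+1)|$. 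Thus $\mathcal{D}(1;0)$ is the union of the point sets of the circles $\Phi(c)+c$ for which $|\Phi(c^{-1})\cap(\Phi(1)+1)|=1$, and everything comes down to the question: for which orbits $O$ is $|O\cap(\Phi(1)+1)|=1$?

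The heart of the proof is the answer: $O=\Phi(2)$ and no other. For this I would first prove the auxiliary identity $(\star)$: if $\alpha,\beta\in\Phi$ and $\alpha+\beta=2$, then $\alpha=\beta=1$. Indeed, $\Phi(1)$ and $\Phi(1)+2$ are distinct circles, since $\Phi(1)+2=\Phi(1)$ would make $\Phi$ invariant under $+2$, forcing the additive order $p$ of $2$ to divide $|\Phi|$, which is impossible as $p\nmid|N|-1$; and if $\alpha+\beta=2$ with $\alpha\neq\beta$, then (using $-1,-\alpha,-\beta\in\Phi$) the three distinct points $1,\alpha,\beta$ would lie on both of these circles, contradicting circularity. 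From $(\star)$ one gets $\Phi(2)\cap(\Phi(1)+1)=\{2\}$. Now let $O$ be any orbit meeting $\Phi(1)+1$, say $\phi_0+1\in O$ with $\phi_0\in\Phi\setminus\{-1\}$; if $O\neq\Phi(2)$ then $\phi_0\neq 1$, so $\phi_0^{-1}\in\Phi\setminus\{\phi_0,-1\}$ and $\phi_0^{-1}+1=\phi_0^{-1}(\phi_0+1)$ is a second, distinct, nonzero element of $O\cap(\Phi(1)+1)$, whence $|O\cap(\Phi(1)+1)|=2$ by circularity. Hence $\Phi(2)$ is the unique orbit with intersection of size one.

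Finally, $\Phi(c)+c$ is tangent to $\Phi(1)$ iff $\Phi(c^{-1})=\Phi(2)$, i.e. iff $c\in\Phi(2^{-1})$, and for such $c$ one has $\Phi(c)=\Phi(2^{-1})$; so the circles through $0$ tangent to $\Phi(1)$ are exactly $\{\Phi(2^{-1})+c\mid c\in\Phi(2^{-1})\}=E^{2^{-1}}_{2^{-1}}$, giving $\mathcal{D}(1;0)=\sqcup E^{2^{-1}}_{2^{-1}}$ and, with the reduction, the theorem. I expect the main obstacle to be this middle paragraph: recognizing that circularity is precisely what forces both $(\star)$ (via the "three points on two circles" argument) and the uniqueness of the orbit $\Phi(2)$ (via the involution trick $\phi_0+1\mapsto\phi_0^{-1}+1=\phi_0^{-1}(\phi_0+1)$); the remaining verifications, including $\Phi(1)+2\neq\Phi(1)$ and the well-definedness of the center of a circle through $0$, are routine bookkeeping.
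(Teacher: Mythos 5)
Your proof is correct. It rests on the same two pillars as the paper's own argument: that $|\Phi|$ even forces $-1\in\Phi$ (so $2=-(-1)+1$ is an invertible f.p.f.\ map and every circle through $0$ has the form $\Phi(c)+c$), and the ``reflection'' trick by which an intersection point $\varphi(r)+r$ of $\Phi(r)+r$ with another circle yields the further intersection point $\varphi^{-1}(\varphi(r)+r)=r+\varphi^{-1}(r)$, which circularity then forbids. But your decomposition is genuinely different. The paper fixes only the center at $0$, keeps the radius general, and proves the two inclusions $\sqcup E^a_a\subseteq\mathcal{D}(2a;0)$ and $\mathcal{D}(2a;0)\subseteq\sqcup E^a_a$ directly at the level of circles, each time working with the set $(\Phi(a)+c)\cap\Phi(2a)$ and, in the converse inclusion, silently normalizing the tangency point to $2a$ via Lemma \ref{1.25}. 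You normalize the radius to $1$ as well, convert tangency into the multiplicative count $|(\Phi(c)+c)\cap\Phi(1)|=|\Phi(c^{-1})\cap(\Phi(1)+1)|$, and reduce the whole theorem to the statement that $\Phi(2)$ is the unique nontrivial orbit meeting $\Phi(1)+1$ exactly once; your identity $(\star)$, proved with the pair of circles $\Phi(1)$ and $\Phi(1)+2$, plays the role that the pair $\Phi(a)+a$, $\Phi(2a)$ plays in the paper. Your route buys a cleaner converse direction (uniqueness of the tangent orbit subsumes both the existence of exactly one tangency point and the fact that no other circle through $0$ is tangent, with no WLOG on where the tangency occurs); the paper's version stays entirely in the additive picture and never needs to invert $c$. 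Two small points: your parenthetical ``$=2$ by circularity'' requires $O\neq\Phi(1)+1$, which holds since $0\in\Phi(1)+1$ but $0\notin O$ --- though only ``$\geq 2$'' is actually needed; and the step deducing $\Phi(2)\cap(\Phi(1)+1)=\{2\}$ from $(\star)$ deserves its one line, namely $2u=v+1\Rightarrow vu^{-1}+u^{-1}=2\Rightarrow u=v=1$.
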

\begin{proof}
At first we observe that $\Phi$ is a cyclic group of even order, so $-1\in \Phi$. Since $(N,\Phi)$ is a Ferrero pair, from Definition \ref{1.11} and $N$ finite, it follows that $-(-1)+1=1+1=2:N\rightarrow N$, $2:x\rightarrow x+x$ is a f.p.f. automorphism and so we can consider $2^{-1}$ (as automorphism). By the previous remark, we can consider, without loss of generality, only the disks like $\mathcal{D}(2a;0)$, $a\neq 0$.\\
Suppose that $|\Phi|=2n$, let $\varphi$ be a generator of $\Phi$ and $c\in \Phi(a)$, we now show that $\Phi(a)+c\subseteq \mathcal{D}(2a;0)$, or equivalently that
\[
    0\in \Phi(a)+c
\]
and
\[
    |(\Phi(a)+c) \cap \Phi(2a)|=1.
\]
Since $c\in \Phi(a)$, there exists $1\leq l\leq 2n$ so that $c=\varphi^l(a)$; observing that of course $\varphi^n=-1$, we get
\[
    0=\varphi^{l+n}(a)+c\in \Phi(a)+c
\]
and
\[
    \varphi^{l}(2a)=2\varphi^l(a)=\varphi^l(a)+\varphi^l(a)\in \Phi(a)+c
\]
and finally
\[
    \varphi^l(2a)\in ((\Phi(a)+c) \cap \Phi(2a)).
\]
Now we must prove $|(\Phi(a)+c) \cap \Phi(2a)|=1$. Without loss of generality, we suppose that $l=0$ (i.e., $c=a$), in such way our thesis becomes
$(\Phi(a)+a)\cap \Phi(2a)=\{2a\}$.\\
By contradiction let $m\not\equiv 0 \pmod{2n}$ such that $\varphi^m(2a)\in \Phi(a)+a$. Then there exists $1\leq h\leq 2n-1$ so that $\varphi^m(2a)=\varphi^h(a)+a$, or equivalently $\varphi^{-h}(\varphi^m(2a))=\varphi^{-h}(\varphi^h(a)+a)$, which means that
\bd
    \varphi^{m-h}(2a)=\varphi^{-h}(a)+a.
\ed
Then also $\varphi^{m-h}(2a)\in \Phi(a)+a$ and such element is different from $2a$ and $\varphi^m(2a)$ because every element of $\Phi$ is f.p.f.
 Therefore $|(\Phi(a)+a)\cap \Phi(2a)|\geq 3$ and this is a contradiction by the hypothesis of circularity. So we have proved that for every $c\in \Phi(a)$, it is $\Phi(a)+c\subseteq \mathcal{D}(2a;0)$, that means
\bd
    \sqcup E^a_a\subseteq \mathcal{D}(2a;0).
\ed
Now let $f\in \mathcal{D}(2a;0)$, then $f$ belongs to a circle passing through $0$ and, without loss of generality, tangent to $\Phi(2a)$ in $2a$; therefore let $\Phi(s)+s$ be such a circle. If $\varphi^t(s)+s=2a$, then
\bd
    \varphi^{-t}(\varphi^t(s)+s)=s+\varphi^{-t}(s)=\varphi^{-t}(2a)\in (\Phi(s)+s)\cap \Phi(2a)=\{2a\}
\ed
for which $t=0$, and $2s=2a$, that is $s=a$. In conclusion we have proved that
\bd
    \mathcal{D}(2a;0)\subseteq \sqcup E^a_a,
\ed
and with the previous inclusion, we have the thesis.
\end{proof}
\begin{corollary} \label{2.9}
In the same hypothesis of the previous theorem, the center is interior to the circle, in particular every disk is nonempty.
\end{corollary}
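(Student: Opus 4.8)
The plan is to reduce the statement, via the homogeneity lemmas, to a claim about the single point $0$, and then obtain that claim directly from Theorem~\ref{2.7}.

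First I would invoke Lemma~\ref{2.3}: since $\mathcal{I}(\Phi(a)+b)=\mathcal{I}(\Phi(a))+b$, the assertion that $b$ is interior to $\Phi(a)+b$ is equivalent to $0\in\mathcal{I}(\Phi(a))=\mathcal{D}(a;0)\setminus\Phi(a)$, so it suffices to check the two facts $0\in\mathcal{D}(a;0)$ and $0\notin\Phi(a)$. The second is immediate: as $a\neq0$ and every element of $\Phi$ is fixed-point-free (equivalently, by Theorem~\ref{1.13}(3) the orbits partition $N$ with $\Phi(0)=\{0\}$), the orbit $\Phi(a)$ cannot contain $0$.

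For $0\in\mathcal{D}(a;0)$ I would use Theorem~\ref{2.7}, which under the present hypotheses gives the set equality $\mathcal{D}(a;0)=\sqcup E^{2^{-1}a}_{2^{-1}a}$; thus it is enough to exhibit one circle of the family $E^{2^{-1}a}_{2^{-1}a}$ passing through $0$. Writing $|\Phi|=2n$ and letting $\varphi$ generate $\Phi$, one has $\varphi^{n}=-1\in\Phi$ (this is exactly the opening remark in the proof of Theorem~\ref{2.7}, which also guarantees that $2^{-1}$ makes sense). Then $-2^{-1}a=\varphi^{n}(2^{-1}a)\in\Phi(2^{-1}a)$, so
\[
    0=\varphi^{n}(2^{-1}a)+2^{-1}a\in\Phi(2^{-1}a)+2^{-1}a ,
\]
and $\Phi(2^{-1}a)+2^{-1}a$ is a genuine member of $E^{2^{-1}a}_{2^{-1}a}$ because $2^{-1}a\in\Phi(2^{-1}a)$. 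Hence $0\in\sqcup E^{2^{-1}a}_{2^{-1}a}=\mathcal{D}(a;0)$, and combining this with the previous paragraph, $0\in\mathcal{I}(\Phi(a))$, i.e.\ $b\in\mathcal{I}(\Phi(a)+b)$. In particular $b\in\mathcal{D}(a;b)$, so no disk is empty.

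There is no genuine obstacle here: the corollary is essentially a bookkeeping consequence of Theorem~\ref{2.7} together with the translation-invariance of Lemma~\ref{2.3}. The only point to be careful about is that the circle through $0$ we produce really is one of those admitted in Definition~\ref{2.1} of the disk; but this is automatic once we have the set-theoretic description $\mathcal{D}(a;0)=\sqcup E^{2^{-1}a}_{2^{-1}a}$, since all the incidence and tangency conditions ($r\neq0$, $0$ on the circle, tangency to $\Phi(a)$ in one point) were verified for every member of that family inside the proof of Theorem~\ref{2.7}.
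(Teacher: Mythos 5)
Your argument is correct and matches the paper's intent: the paper states Corollary~\ref{2.9} without proof, treating it as immediate from Theorem~\ref{2.7}, and your write-up simply makes explicit the two facts involved ($0\in\sqcup E^{2^{-1}a}_{2^{-1}a}=\mathcal{D}(a;0)$ because every circle $\Phi(2^{-1}a)+c$ with $c\in\Phi(2^{-1}a)$ passes through $0$, and $0\notin\Phi(a)$ since the orbits partition $N$), combined with the translation invariance of Lemma~\ref{2.3}. Nothing is missing; this is the same route, just written out in full.
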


\begin{lemma}\label{oss2}
In the same hypothesis of the previous theorem, if $\Phi(b)\neq\{0\}$ and $\Phi(b)\neq\Phi(2a)$, then for all $C \in E^{a}_{a}$ is
\[
|C\cap \Phi(b)|\in\{0,2\}.
\]
\end{lemma}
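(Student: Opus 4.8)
The plan is to reduce the statement to a parity argument on a suitably chosen finite set. Write $C=\Phi(a)+c$ with $c\in\Phi(a)$. First I would record the easy upper bound: since $\Phi(b)\neq\{0\}$ we have $b\neq 0$, so $\Phi(b)$ and $C$ are both blocks of $\mathcal{B}^*$, and they are distinct — by the computation in the proof of Theorem \ref{2.7}, $C$ meets $\Phi(2a)$ (if $c=\varphi^l(a)$ then $2c=\varphi^l(2a)\in C\cap\Phi(2a)$), so $C=\Phi(b)$ would force $\Phi(b)=\Phi(2a)$ by the partition property of Theorem \ref{1.13}, against the hypothesis. Hence circularity gives $|C\cap\Phi(b)|\leq 2$, and it remains only to exclude the value $1$, i.e.\ to prove that this number (which is the same for every $C\in E^a_a$ by Theorem \ref{1.26}) is even.

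Next I would introduce, for $x\in N$, the number $\mu(x)$ of circles of $E^a_a$ through $x$. Since within $E^a_a$ the assignment (center)$\mapsto$(circle) is a bijection onto $E^a_a$ (because $|E^a_a|=|\Phi(a)|$ by Definition \ref{1.23}), one has $\mu(x)=|\Phi(a)\cap(x-\Phi(a))|$. Using Lemma \ref{1.25} ($\Phi$ permutes $E^a_a$) together with the fact that every $\psi\in\Phi$ fixes the orbit $\Phi(b)$ setwise, the map $C'\mapsto\psi(C')$ gives a bijection between the circles of $E^a_a$ through $x$ and those through $\psi(x)$; as $\Phi$ is transitive on $\Phi(b)$, $\mu$ is constant on $\Phi(b)$, say $\mu\equiv\mu_b$ there. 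A double count of the flags $(C',x)$ with $C'\in E^a_a$ and $x\in C'\cap\Phi(b)$ then yields $|E^a_a|\cdot|C\cap\Phi(b)|=|\Phi(b)|\cdot\mu_b$, and since $|E^a_a|=|\Phi(a)|=|\Phi|=|\Phi(b)|$ by Definition \ref{1.23} and Theorem \ref{1.13} this gives $|C\cap\Phi(b)|=\mu_b$. So it suffices to show $\mu_b$ is even.

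This is the heart of the argument. Fix $x\in\Phi(b)$ and set $S_x=\{c'\in\Phi(a)\mid x-c'\in\Phi(a)\}$, so $\mu_b=|S_x|$. The map $\sigma\colon c'\mapsto x-c'$ sends $S_x$ into itself and satisfies $\sigma^2=\mathrm{id}$, so $\mu_b=|S_x|$ is congruent mod $2$ to the number of fixed points of $\sigma$. A fixed point satisfies $2c'=x$; since $|\Phi|$ is even we have $-1\in\Phi$ and, exactly as in the proof of Theorem \ref{2.7}, $2=-(-1)+1$ is an invertible f.p.f.\ automorphism of $N$, so the only candidate is $c'=2^{-1}x$, which belongs to $S_x$ precisely when $2^{-1}x\in\Phi(a)$ (and then $x-c'=2^{-1}x\in\Phi(a)$ automatically). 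In the field-generated case $2\Phi(a)=\Phi(2a)$, because the automorphisms in $\Phi$ act by multiplication in the field; hence $\sigma$ has a fixed point if and only if $x\in\Phi(2a)$. But $x\in\Phi(b)$ and $\Phi(b)\neq\Phi(2a)$, so $x\notin\Phi(2a)$ by the partition property, $\sigma$ is fixed-point-free, and $\mu_b=|S_x|$ is even. Together with $|C\cap\Phi(b)|\leq 2$ this gives $|C\cap\Phi(b)|\in\{0,2\}$.

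I expect the only genuinely delicate point to be isolating the right finite set $S_x$ and its involution in the last paragraph, together with the bookkeeping that converts the local data ``$\mu$ constant on $\Phi(b)$'' and Theorem \ref{1.26} into the numerical identity $|C\cap\Phi(b)|=\mu_b$; the upper bound $\leq 2$ and all the cardinality equalities are routine consequences of circularity, Theorem \ref{1.13} and Definition \ref{1.23}.
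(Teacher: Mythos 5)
Your proof is correct, but it takes a genuinely different and considerably longer route than the paper's. The paper argues directly on the set $C\cap\Phi(b)$: writing $C=\Phi(r)+r$ with $r\in\Phi(a)$ and a given intersection point as $c=\varphi(r)+r$, it observes that $\varphi^{-1}(c)=\varphi^{-1}(r)+r$ lies again in $C$ and, being in $\Phi(c)=\Phi(b)$, is a second intersection point, distinct from $c$ unless $c=0$ (excluded by $\Phi(b)\neq\{0\}$) or $\varphi=1$ (excluded because then $c=2r\in\Phi(2a)$); circularity caps the intersection at two points, so it equals $\{c,\varphi^{-1}(c)\}$. You instead dualize: you count circles of $E^a_a$ through a fixed point $x\in\Phi(b)$, exhibit the involution $c'\mapsto x-c'$ on their centers, show it is fixed-point-free precisely because $x\notin\Phi(2a)$, and then transfer the resulting parity back to $|C\cap\Phi(b)|$ by a flag count resting on Theorem \ref{1.26}, Lemma \ref{1.25} and $|E^a_a|=|\Phi(a)|=|\Phi(b)|$. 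Both proofs exploit the same underlying symmetry --- an involution whose only possible fixed points would land in $\Phi(0)$ or $\Phi(2a)$ --- but the paper's version produces the second point explicitly in two lines, whereas yours buys a reusable counting template (the identity $|C\cap\Phi(b)|=\mu_b$ and parity via involutions) at the cost of the double-counting bookkeeping. All the individual steps you rely on --- the bijection from centers to circles of $E^a_a$, the constancy of $|C'\cap\Phi(b)|$ over $C'\in E^a_a$, the distinctness of $C$ and $\Phi(b)$, and $2\Phi(a)=\Phi(2a)$ in the field-generated case --- are legitimately available from the cited results, so there is no gap.
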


\begin{proof}
Let $C \in E^{a}_{a}$ and $b \in N$. Then $C=\Phi(a)+a'$, for some $a'\in\Phi(a)$, or, equivalently, $C=\Phi(r)+r$, for some $r\in \Phi(a)$. If $|C\cap \Phi(b)|\neq 0$, let $c \in C \cap \Phi(b)$, it follows $\Phi(c)=\Phi(b)$. As $c\in C$, let $\varphi\in\Phi$ such that $c=\varphi(r)+r$, then also $\varphi^{-1}(c)=\varphi^{-1}(\varphi(r)+r)=r+\varphi^{-1}(r)$ belongs to $C$. So $C\cap \Phi(b)=C\cap \Phi(c)=\{c, \varphi^{-1}(c)\}$, and $c=\varphi^{-1}(c)$ if, and only if, $c=0$ or $\varphi=1$, which is against the hypothesis as in the first case $\Phi(b)=0$ and in the second $\Phi(b)=\Phi(2a)$.
\end{proof}

\begin{theorem} \label{card}
In the same hypothesis of the previous theorem, if $c\in \mathcal{D}(a;b)$, then $\Phi(c)+b\subseteq \mathcal{D}(a;b)$. More precisely, if $|\Phi|=2n$, then every disk is union of $n+1$ circles (one of which degenerates), that is there are
$c_1,...,c_{n-1}\in N^0\setminus \Phi(a)$, with $\Phi(c_i)\neq \Phi(c_j)$ if $i\neq j$, so that
\bd
    \mathcal{D}(a;b)=(\Phi(0) \cup \Phi(c_1)\cup...\cup \Phi(c_{n-1})\cup \Phi(a))+b.
\ed
It follows that every disk has exactly $2n^2+1$ points.
\end{theorem}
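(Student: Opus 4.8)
The plan is to reduce at once to the case $b=0$ by Lemma \ref{2.3}, and then to exploit the explicit description $\mathcal{D}(a;0)=\sqcup E^{r}_{r}$ with $r=2^{-1}a$ furnished by Theorem \ref{2.7} (note $r\in N^{0}$, since $2$ is invertible as observed in the proof of that theorem, and $2r=r+r=a$). Fix the circle $A=\Phi(r)+r\in E^{r}_{r}$. First I would record that $\Phi$ acts transitively on the circles of $E^{r}_{r}$: a member $\Phi(r)+b$ of $E^{r}_{r}$ has $b=\psi(r)$ for some $\psi\in\Phi$, and then $\Phi(r)+b=\psi(A)$ by Lemma \ref{1.25}. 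Hence
\[
    \mathcal{D}(a;0)=\sqcup E^{r}_{r}=\bigcup_{\psi\in\Phi}\psi(A)=\bigcup_{x\in A}\Phi(x),
\]
so $\mathcal{D}(a;0)$ is a union of $\Phi$-orbits; combined with Lemma \ref{2.3}, which exhibits $\mathcal{D}(a;b)$ as a translate of $\mathcal{D}(a;0)$, this yields the first assertion and reduces the whole theorem to determining which orbits meet the single circle $A$.

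Next I would dissect $A$, which has exactly $|\Phi|=2n$ points. Two of them are special: $0\in A$ (writing $\varphi$ for a generator of $\Phi$ one has $\varphi^{n}=-1$, whence $0=\varphi^{n}(r)+r\in\Phi(r)+r$), lying in the degenerate orbit $\Phi(0)=\{0\}$; and $a=r+r\in A$, lying in $\Phi(a)$, with moreover $A\cap\Phi(a)=\{a\}$ --- the tangency established inside the proof of Theorem \ref{2.7}, which is the place where circularity is used. Each of the remaining $2n-2$ points $x\in A$ satisfies $x\neq 0$ and $x\notin\Phi(a)$, hence $\Phi(x)\neq\{0\}$ and $\Phi(x)\neq\Phi(a)$, and Lemma \ref{oss2} then forces $|A\cap\Phi(x)|=2$. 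Since $A\cap\Phi(x)$ contains neither $0$ nor $a$, these $2n-2$ points split into blocks of size exactly $2$, one per orbit, so they occupy exactly $n-1$ pairwise distinct orbits $\Phi(c_{1}),\dots,\Phi(c_{n-1})$ with each $c_{i}\in N^{0}\setminus\Phi(a)$.

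Assembling the pieces,
\[
    \mathcal{D}(a;0)=\Phi(0)\cup\Phi(c_{1})\cup\dots\cup\Phi(c_{n-1})\cup\Phi(a),
\]
a union of $n+1$ pairwise disjoint orbits (orbits partition $N$ by Theorem \ref{1.13}), one of which, $\Phi(0)$, is degenerate; therefore $|\mathcal{D}(a;0)|=1+(n-1)\cdot 2n+2n=2n^{2}+1$. Translating by $b$ via Lemma \ref{2.3} gives the displayed description of $\mathcal{D}(a;b)$ and its cardinality $2n^{2}+1$.

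The structural part --- the translation reduction, the $\Phi$-transitivity on $E^{r}_{r}$, and the identity $\mathcal{D}(a;0)=\bigcup_{x\in A}\Phi(x)$ --- is routine. The substance, and the step I expect to need the most care, is the bookkeeping on a single generating circle $A$: the dichotomy $|A\cap\Phi(x)|\in\{0,2\}$ for the non-extreme orbits (Lemma \ref{oss2}) together with the tangency $A\cap\Phi(a)=\{a\}$ (circularity). Once these are in place, the splitting $2n=2+2(n-1)$ of the points of $A$ is forced, and both the orbit decomposition and the point count follow immediately.
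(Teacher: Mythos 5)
Your proposal is correct and follows essentially the same route as the paper: reduce to $b=0$, invoke Theorem \ref{2.7} to write the disk as $\sqcup E^{r}_{r}$, use Lemma \ref{1.25} for $\Phi$-invariance, and use Lemma \ref{oss2} plus the tangency $A\cap\Phi(a)=\{a\}$ to sort the $2n$ points of a single circle $A$ into $\Phi(0)$, $\Phi(a)$, and $n-1$ further orbits. The only difference is organizational: your identity $\mathcal{D}(a;0)=\bigcup_{x\in A}\Phi(x)$ settles both inclusions at once, where the paper argues the two containments separately.
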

\begin{proof}
Without loss of generality, we can consider $b=0$, then from Theorem \ref{2.7} there exists $r\in N$ such that $\mathcal{D}(a;0)=\sqcup E^{r}_{r}$. Let $c\in \mathcal{D}(a;0)$, then $C\in E^{r}_{r}$ exists so that $c\in C$. By Lemma \ref{1.25}, we have that for any $\varphi\in \Phi$, $\varphi(C)\in E^{r}_{r}$ and so $\varphi(c)\in \mathcal{D}(a;0)$. For the generality we have chosen $\varphi$, the first part of the thesis follows.

Now let $c \in \mathcal{D}(a;0)\setminus (\Phi(0) \cup \Phi(a))$, then there exists $C \in E^{r}_{r}$ such that $c \in C$, and from Lemma \ref{oss2} $|C\cap \Phi(c)|=2$; in particular the $2n-2$ elements in $C\setminus(\Phi(0)\cup\Phi(a))$ belong two by two to $n-1$ distinct orbits. Let $\Phi(c_1),\Phi(c_2),\dots,\Phi(c_{n-1})$ be such orbits. As $c_i \in C$ and $C\subseteq \mathcal{D}(a;0)$, then, from the first part of the theorem, for all $i \in \{1,\dots,n-1\}$, $\Phi(c_i) \subseteq \mathcal{D}(a;0)$, and then $\Phi(0) \cup \Phi(c_1) \cup \dots \cup \Phi(c_{n-1}) \cup \Phi(a) \subseteq \mathcal{D}(a;0)$.

We now prove the other inclusion. Let $x \in \mathcal{D}(a;0)\setminus (\Phi(0) \cup \Phi(a))$, then $x\in C'$ for some $C'\in E^{r}_{r}$. Then from Lemma \ref{oss2} $|C'\cap\Phi(x)|=2$, moreover there exists $\varphi\in \Phi$ such that $C=\varphi(C')$, and then $|C\cap\Phi(x)|=|\varphi(C)\cap\Phi(x)|=2$. So let $c\in C\cap\Phi(x)$, then there exists $i\in \{1,\dots ,n-1\}$ such that $c\in\Phi(c_i)$, and from $c\in \Phi(x)$, it follows $\Phi(x)=\Phi(c_i)$, and then $x\in\Phi(c_i)$.

The last part is now an obvious corollary.
\end{proof}

\begin{corollary}
In the same hypothesis of the previous theorem, let $\varphi$ a generator of $\Phi$ and $r=2^{-1}a$, then
\[
\mathcal{D}(a;0)=\Phi(0)\cup\Phi(1)\cup\Phi((\varphi+1)r)\cup\dots\cup\Phi((\varphi^{n-1}+1)r).
\]
\end{corollary}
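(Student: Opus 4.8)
The plan is to extract the explicit orbit decomposition of $\mathcal{D}(a;0)$ from one concretely described tangent circle, relying on Theorems~\ref{2.7} and~\ref{card}. First I would apply Theorem~\ref{2.7} with $b=0$ to get $\mathcal{D}(a;0)=\sqcup E^{r}_{r}$ for $r=2^{-1}a$, so that $\mathcal{D}(a;0)$ consists exactly of the points on the circles $\Phi(r)+b$ with $b\in\Phi(r)$. By Lemma~\ref{1.25} every circle of $E^{r}_{r}$ is a $\Phi$-image of the single circle $C=\Phi(r)+r$ (take $b=r\in\Phi(r)$), and orbits are $\Phi$-invariant (Theorem~\ref{1.13}); hence the set of orbits meeting $\sqcup E^{r}_{r}=\mathcal{D}(a;0)$ coincides with the set of orbits meeting $C$, and it is enough to determine the latter.

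Next I would list the points of $C$. Identifying $\varphi$ with a generating element of the multiplicative subgroup of the field corresponding to $\Phi$ (so that $\varphi^{j}(x)=\varphi^{j}x$ and $(\varphi^{j}+1)r=\varphi^{j}(r)+r$ literally make sense), and using that $|\Phi|=2n$ forces $\varphi^{n}=-1$, one has
\[
C=\{\varphi^{j}(r)+r\mid 0\le j\le 2n-1\}=\{(\varphi^{j}+1)r\mid 0\le j\le 2n-1\}.
\]
Here $j=n$ gives $(\varphi^{n}+1)r=0$, hence the orbit $\Phi(0)$; and $j=0$ gives $(\varphi^{0}+1)r=2r=a$, hence the orbit $\Phi(a)$, which is the term $\Phi(1)$ of the statement. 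For the other indices I would reuse the pairing used in the proof of Theorem~\ref{card}: applying $\varphi^{-j}\in\Phi$ carries $(\varphi^{j}+1)r$ to $r+\varphi^{-j}(r)=(\varphi^{2n-j}+1)r$, which lies in $\Phi((\varphi^{j}+1)r)$; thus $\Phi((\varphi^{j}+1)r)=\Phi((\varphi^{2n-j}+1)r)$ for all $j$, and the orbits met by $C$ are precisely $\Phi(0)$, $\Phi(a)$, $\Phi((\varphi+1)r),\dots,\Phi((\varphi^{n-1}+1)r)$.

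It then remains to assemble the two inclusions. For each $j$ with $0\le j\le n$ the point $(\varphi^{j}+1)r$ lies on $C\subseteq\sqcup E^{r}_{r}=\mathcal{D}(a;0)$, so by the first assertion of Theorem~\ref{card} its whole orbit is contained in $\mathcal{D}(a;0)$, giving $\Phi(0)\cup\Phi(a)\cup\bigcup_{j=1}^{n-1}\Phi((\varphi^{j}+1)r)\subseteq\mathcal{D}(a;0)$; conversely, by the previous paragraph every point of $\mathcal{D}(a;0)$ lies in one of these $n+1$ orbits. Finally these $n+1$ orbits are pairwise distinct---this is exactly the counting in Theorem~\ref{card}, and it can also be seen directly: $0\in C$ and $a\in C$ contribute the degenerate orbit and the unique point of $C\cap\Phi(a)$ coming from the tangency established in Theorem~\ref{2.7}, while if $1\le i<j\le n-1$ gave $\Phi((\varphi^{i}+1)r)=\Phi((\varphi^{j}+1)r)$ then Lemma~\ref{oss2} would force this intersection with $C$ to be $\{(\varphi^{i}+1)r,(\varphi^{j}+1)r\}$, hence $(\varphi^{j}+1)r=(\varphi^{2n-i}+1)r$ and $j\equiv 2n-i\pmod{2n}$, impossible. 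I do not anticipate a genuine difficulty: all the substance is already in Theorems~\ref{2.7} and~\ref{card}, and the only points needing care are the bookkeeping with the exponents (the special roles of $j=0$ and $j=n$ and the involution $j\mapsto 2n-j$) and the harmless identification of $\varphi$ with a field element that makes the expression $(\varphi^{j}+1)r$ meaningful.
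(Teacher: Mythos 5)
Your proposal is correct and follows essentially the same route as the paper: both start from the orbit decomposition of Theorem~\ref{card}, enumerate $C=\Phi(r)+r$ as $\{(\varphi^{j}+1)r\mid 0\le j\le 2n-1\}$, and use the pairing $\varphi^{-j}\bigl((\varphi^{j}+1)r\bigr)=(\varphi^{2n-j}+1)r$ to push every representative into the range $0\le j\le n$ (your extra verification of pairwise distinctness is already supplied by Theorem~\ref{card}, as you note). Your reading of the term $\Phi(1)$ in the statement as $\Phi(a)=\Phi((\varphi^{0}+1)r)$ matches what the paper's own proof actually establishes.
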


\begin{proof}
From the previous theorem we know that
\begin{equation}\label{eq1}
\mathcal{D}(a;0)=\Phi(0)\cup\Phi(1)\cup\Phi(c_1)\cup\dots\cup\Phi(c_{n-1}),
\end{equation}
where $c_1\dots c_{n-1} \in C=\Phi(r)+r$, and $\Phi(c_i)\neq \Phi(c_j)$ for $i \neq j$.
Now, let $c_i=\varphi^{n+j}+r$, for $j\in\{n,\dots,2n-1\}$, then $c=\varphi^{n+j}(1+\varphi^{n-j})(r)\in \Phi((\varphi^{n-j}+1)(r))$. So, if $c_i'=(\varphi^{n-j}+1)(r)$, we can replace $c_i$ by $c_i'$ in Equation \ref{eq1}, as $\Phi(c_i)=\Phi(c_i')$. So we can always suppose $c_i=(\varphi^{i}+1)r$ for $i \in \{1,\dots,n-1\}$.
\end{proof}

\subsection{Combinatorial properties}
We are now able to prove, in a special case, an important combinatorial property of the incidence structures obtained considering the sets of all disks, namely
\bd
    \mathcal{B}^{\mathcal{D}}=\{\mathcal{D}(a;b)\mid a,b\in N,\ a\neq 0\}=\{\sqcup(E^r_r+d)\mid r,d\in N,\ r\neq 0\}.
\ed

\begin{lemma}\label{lemma1}
Let $(N,\Phi)$ be a finite circular field-generated Ferrero pair with $|N|=p$ and $p$ prime. Then, for all $a,b,b'\in N$,
\[
\mathcal{D}(a;b)=\mathcal{D}(a;b') \Leftrightarrow b=b' \text{ or } \mathcal{D}(a;b)=N.
\]
\end{lemma}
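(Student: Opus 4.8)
The plan is to prove the two directions separately, with the right-to-left implication being essentially immediate and the left-to-right implication carrying the real content. For the ``$\Leftarrow$'' direction: if $b=b'$ there is nothing to prove, while if $\mathcal{D}(a;b)=N$ then by Lemma~\ref{2.3} we also have $\mathcal{D}(a;b')=\mathcal{D}(a;0)+b'=\mathcal{D}(a;0)+b=\mathcal{D}(a;b)=N$, since $\mathcal{D}(a;0)=N$ is forced by $\mathcal{D}(a;0)+b=N$. So both directions reduce to understanding when translation by a nonzero element can fix a disk.

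For the ``$\Rightarrow$'' direction, assume $\mathcal{D}(a;b)=\mathcal{D}(a;b')$ with $b\neq b'$; I want to conclude $\mathcal{D}(a;b)=N$. By Lemma~\ref{2.3}, $\mathcal{D}(a;0)+b=\mathcal{D}(a;0)+b'$, i.e. $\mathcal{D}(a;0)$ is invariant under translation by $d:=b'-b\neq 0$, hence under the whole subgroup $\langle d\rangle$ of $(N,+)$. Since $|N|=p$ is prime, $\langle d\rangle=N$, so $\mathcal{D}(a;0)$ is invariant under all translations; being nonempty (Corollary~\ref{2.9}, assuming we are in the even-$|\Phi|$ setting inherited through the chain of ``same hypothesis'' statements — or, more robustly, because $b\in\mathcal{D}(a;b)$ always since the degenerate circle $\Phi(a)+b$ through $b$ is tangent to itself, wait, that needs care), we get $\mathcal{D}(a;0)=N$, and then $\mathcal{D}(a;b)=N$ as well. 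The one subtlety is establishing that $\mathcal{D}(a;0)\neq\emptyset$ under whatever hypotheses are actually in force here: the lemma's hypotheses only say ``finite circular field-generated Ferrero pair with $|N|=p$ prime'', which does not literally include $|\Phi|$ even. But in fact $\mathcal{D}(a;b)$ is never empty, because the circle $\Phi(a)+b$ itself satisfies the defining conditions ($b$ need not lie on it, so this is not automatic) — more safely, one checks that at least one tangent circle through $b$ exists, or one simply notes that if $\mathcal{D}(a;b)=\emptyset$ then $\mathcal{D}(a;b)=\mathcal{D}(a;b')=\emptyset$ for all $b'$ only vacuously, and the statement still needs $\mathcal{D}(a;b)=N$; so nonemptiness genuinely must be argued.

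The key step — and the main obstacle — is therefore exactly this nonemptiness claim, which I would extract as follows: given any circle $\Phi(a)+b$, pick any point $p_0\in\Phi(a)+b$ distinct from $b$ (possible since circles have $|\Phi|\geq 2$ points), and by circularity the pair $\{b,p_0\}$ lies on at least two blocks, at least one of which meets $\Phi(a)+b$ in exactly the point... no — I would instead argue that among all circles $\Phi(r)+c$ through $b$, the intersection numbers with $\Phi(a)+b$ take value $1$ for some choice, using a counting argument over the family $E^a_c$ for suitable $c$ together with Theorem~\ref{1.26} (constancy of tangency counts), or simply invoke that the construction in Theorem~\ref{2.7} (valid whenever $|\Phi|$ is even, and if $|\Phi|$ is odd one argues separately or the hypothesis is implicitly even throughout this subsection) exhibits an explicit circle inside $\mathcal{D}(a;b)$. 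Once nonemptiness is in hand, the translation-invariance argument above closes the proof cleanly, so I expect the writeup to spend most of its length pinning down the ambient hypotheses and the nonemptiness of disks, and only a few lines on the group-theoretic core.
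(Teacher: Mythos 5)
Your proposal is correct and follows essentially the same route as the paper: reduce by translation homogeneity to $\mathcal{D}(1;0)$, note that invariance under translation by $b'-b\neq 0$ forces invariance under all of $\langle b'-b\rangle=N$ because $p$ is prime, and conclude $\mathcal{D}(1;0)=N$. The nonemptiness issue you spend most of your writeup on is resolved in the paper simply by the (implicit) fact that the center lies in its own disk, i.e.\ $0\in\mathcal{D}(1;0)$ by Corollary~\ref{2.9}, so that $b,2b,\dots,pb$ all lie in $\mathcal{D}(1;0)$; the even-$|\Phi|$ hypothesis under which that corollary holds is tacitly in force throughout this subsection.
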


\begin{proof}
From the previous lemmas, we equivalently show that
\[
\mathcal{D}(1;0)=\mathcal{D}(1;0)+b \Leftrightarrow b=0 \text{ or } \mathcal{D}(1;0)=N.
\]
Let $\mathcal{D}(1;0)=\mathcal{D}(1;0)+b$ with $b\neq 0$. Then $b,2b,\dots ,pb\in \mathcal{D}(1;0)$, and then $\langle b\rangle=N\subseteq \mathcal{D}(1;0)$, i.e., $\mathcal{D}(1;0)=N$.
The other implication is obvious.
\end{proof}

\begin{lemma}\label{lemma2}
Let $(N,\Phi)$ be a finite circular field-generated Ferrero pair with $|N|=p$ and $p$ prime, and let $|\Phi|$ be even. Then, for all $a,a',b,b'\in N$
\[
\mathcal{D}(a;b)=\mathcal{D}(a';b') \Leftrightarrow b=b' \text{ and } \mathcal{D}(a;0)=\mathcal{D}(a;0).
\]
\end{lemma}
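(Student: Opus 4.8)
As written, Lemma~\ref{lemma2} has a typographical slip: the right-hand side condition ``$\mathcal{D}(a;0)=\mathcal{D}(a;0)$'' is a tautology and should read ``$\mathcal{D}(a;0)=\mathcal{D}(a';0)$''. The plan below proves the corrected statement.

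\emph{(A remark first: as written, the right-hand side ``$\mathcal{D}(a;0)=\mathcal{D}(a;0)$'' is a tautology; it should read ``$b=b'$ and $\mathcal{D}(a;0)=\mathcal{D}(a';0)$'', and it is this statement the sketch below proves.)}

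The ``$\Leftarrow$'' direction is immediate from Lemma~\ref{2.3}: if $b=b'$ and $\mathcal{D}(a;0)=\mathcal{D}(a';0)$ then $\mathcal{D}(a;b)=\mathcal{D}(a;0)+b=\mathcal{D}(a';0)+b'=\mathcal{D}(a';b')$. For ``$\Rightarrow$'', the plan is first to apply Lemma~\ref{2.3} again to rewrite the hypothesis as
\[
\mathcal{D}(a;0)+b=\mathcal{D}(a';0)+b';
\]
once $b=b'$ is known, cancelling $b$ in the group $(N,+)$ gives $\mathcal{D}(a;0)=\mathcal{D}(a';0)$ and we are done. So the entire content is to prove $b=b'$.

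For that I would compare the sums of the elements of these two coinciding sets. Writing $|\Phi|=2n$, Theorem~\ref{card} tells us that each of $\mathcal{D}(a;0)$, $\mathcal{D}(a';0)$ is a union of $\Phi$-orbits (the trivial orbit together with $n$ non-trivial ones) and has exactly $2n^2+1$ elements. Since the pair is field-generated, a non-trivial orbit is a coset $Hc$ of a subgroup $H\leq(N^0,\cdot)$ with $|H|=|\Phi|=2n\geq 2$, and $\sum_{h\in H}h=0$ in the field (choose $h_0\in H\setminus\{1\}$; then $(h_0-1)\sum_{h\in H}h=0$ and $h_0-1\neq 0$). Hence every orbit appearing in a disk through $0$ sums to $0$, so $\sum_{x\in\mathcal{D}(a;0)}x=0=\sum_{x\in\mathcal{D}(a';0)}x$, and therefore
\[
\sum_{x\in\mathcal{D}(a;0)+b}x=(2n^2+1)\,b \quad\text{and}\quad \sum_{x\in\mathcal{D}(a';0)+b'}x=(2n^2+1)\,b'.
\]
As the sets, hence these sums, coincide, $(2n^2+1)(b-b')=0$ in $N$; since $1\leq 2n^2+1\leq p$, this forces either $b=b'$ or $2n^2+1=p$ (and in the latter case $\mathcal{D}(a;b)=N$).

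The step I expect to be the real point is excluding $2n^2+1=p$: this is where the three standing hypotheses are genuinely used, and without it the lemma fails (two disks equal to $N$ with different centers). Here I would invoke Proposition~\ref{1.19}: the circular BIBD $(N,\mathcal{B}^*,\in)$ has block size $|\Phi|=2n$ and satisfies $|\Phi|\leq(3+\sqrt{4p-7})/2$; substituting $p=2n^2+1$ and simplifying gives $2n^2-6n+3\leq0$, so $n\in\{1,2\}$. But $n=2$ makes $p=9$, not prime, and $n=1$ makes $\Phi=\{1,-1\}$, for which every pair of distinct points of $N$ lies in exactly one block, contradicting the circularity requirement that every pair lie in at least two blocks. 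Hence $2n^2+1\neq p$, so $b=b'$, and cancelling $b$ finishes the proof. (If one prefers to keep the degenerate case, the clean statement, parallel to Lemma~\ref{lemma1}, is ``$b=b'$ and $\mathcal{D}(a;0)=\mathcal{D}(a';0)$, or $\mathcal{D}(a;b)=N$'', and then this last paragraph is unnecessary; everything else uses only Lemma~\ref{2.3}, Theorem~\ref{card}, and the elementary orbit-sum identity.)
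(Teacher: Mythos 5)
Your proof is correct, but it takes a genuinely different route from the paper's. The paper applies an arbitrary $\varphi\in\Phi\setminus\{1\}$ to both sides of $\mathcal{D}(a;0)=\mathcal{D}(a';0)+b$: since disks centered at $0$ are $\Phi$-invariant (Theorem~\ref{2.7} and Lemma~\ref{1.25}), this yields $\mathcal{D}(a';0)+b=\mathcal{D}(a';0)+\varphi(b)$, whence $b=\varphi(b)$ by Lemma~\ref{lemma1} and $b=0$ because $\varphi$ is fixed-point-free. You instead sum the elements: each nontrivial orbit is a coset of a multiplicative subgroup of order $2n\geq 2$ and so sums to $0$, giving $\sum_{x\in\mathcal{D}(a;0)+b}x=(2n^2+1)b$ and hence $(2n^2+1)(b-b')=0$, with the degenerate case $2n^2+1=p$ excluded via the circularity bound of Proposition~\ref{1.19}. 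Your version leans more heavily on $N=\Z_p$ (the orbit-sum identity and the counting both live in the prime field), whereas the paper's $\varphi$-invariance trick is arguably more structural; on the other hand, your argument bypasses Lemma~\ref{lemma1} entirely and, more importantly, explicitly disposes of the alternative $\mathcal{D}(a;b)=N$, which the paper's proof silently inherits from Lemma~\ref{lemma1} and only rules out in the Remark following Theorem~\ref{2.20} (and there with the slightly off justification that $n=1$ contradicts planarity, where your appeal to the two-blocks-per-pair clause of circularity is the right reason). Your reading of the statement's typo ($\mathcal{D}(a;0)=\mathcal{D}(a';0)$ on the right-hand side) matches what the paper actually proves.
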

\begin{proof}
From the previous lemmas, we equivalently show that
\[
\mathcal{D}(a;0)=\mathcal{D}(a';0)+b \Leftrightarrow c=0 \text{ and } \mathcal{D}(a;0)=\mathcal{D}(a',0).
\]
Let $\mathcal{D}(a;0)=\mathcal{D}(a';0)+b$, and $\varphi \in \Phi$, $\varphi \neq 1$. Then, from Lemma \ref{1.25} and Theorem \ref{2.7}, it follows that $\varphi(\mathcal{D}(a;0))=\varphi(\mathcal{D}(a';0)+b)=\varphi(\mathcal{D}(a';0))+\varphi(b)=\mathcal{D}(a';0)+\varphi(b)$, and $\mathcal{D}(a;0)=\varphi(\mathcal{D}(a;0))$. In particular
\[
\mathcal{D}(a';0)+b=\mathcal{D}(a';0)+\varphi(b),
\]
and applying Lemma \ref{lemma1}, $b=\varphi(b)$. As $\varphi\neq 1$, and $\Phi$ is f.p.f., it follows $c=0$.
The other implication is obvious.
\end{proof}

\begin{lemma}\label{lemma3}
Let $(N,\Phi)$ be a finite circular field-generated Ferrero pair with $|\Phi|$ even, and suppose $\mathcal{D}(1;0)\setminus \{0 \}$ is not a multiplicative group. Then, for all $a,a'\in N$, $\mathcal{D}(a;0)=\mathcal{D}(a';0)$ if, and only if, $a\in \Phi(a')$.
\end{lemma}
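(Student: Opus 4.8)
The plan is to translate the statement into a question about the multiplicative stabilizer
\[
S=\{t\in N^{0}\mid t\cdot\mathcal{D}(1;0)=\mathcal{D}(1;0)\}
\]
of the unit disk, and to prove $S=\Phi$. Throughout I identify $\Phi$ with a subgroup of $(N^{0},\cdot)$, so that $\Phi(x)=\Phi x$; since $|\Phi|$ is even, $-1\in\Phi$ and, as in the proof of Theorem \ref{2.7}, the automorphism $2$ is fixed point free, so $2^{-1}$ is available; put $|\Phi|=2n$. The implication ``$\Leftarrow$'' is immediate: $a\in\Phi(a')$ gives $\Phi(a)=\Phi(a')$, hence $\Phi(2^{-1}a)=\Phi(2^{-1}a')$, hence $E^{2^{-1}a}_{2^{-1}a}=E^{2^{-1}a'}_{2^{-1}a'}$ by Lemma \ref{1.24}, and Theorem \ref{2.7} yields $\mathcal{D}(a;0)=\mathcal{D}(a';0)$. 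For ``$\Rightarrow$'', Lemma \ref{2.4} turns $\mathcal{D}(a;0)=\mathcal{D}(a';0)$ into $a^{-1}a'\in S$, and since $\Phi(a')=a'\Phi$ the desired conclusion $a\in\Phi(a')$ is exactly $a^{-1}a'\in\Phi$; so it suffices to show $S=\Phi$. Here $\Phi\subseteq S$ follows from ``$\Leftarrow$'' together with $\mathcal{D}(\varphi;0)=\varphi\cdot\mathcal{D}(1;0)$, and since $1\in\Phi(1)\subseteq\mathcal{D}(1;0)$ while $S$ fixes both $\mathcal{D}(1;0)$ and $0$, one gets $S\subseteq\mathcal{D}(1;0)\setminus\{0\}$ and that $\mathcal{D}(1;0)\setminus\{0\}$ is $S$-invariant, hence a union of orbits $\Phi(x)$.

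Next I would rule out $S\supsetneq\Phi$. Suppose $s\in S\setminus\Phi$. By Theorem \ref{card}, $\mathcal{D}(1;0)=\{0\}\cup\Phi(1)\cup\Phi(c_{1})\cup\cdots\cup\Phi(c_{n-1})$; as $s$ is a nonzero element of $\mathcal{D}(1;0)$ not lying in $\Phi=\Phi(1)$, its orbit equals some $\Phi(c_{i})$, and in particular $n\geq 2$. Since $s\in S$ we also have $\mathcal{D}(s;0)=s\cdot\mathcal{D}(1;0)=\mathcal{D}(1;0)$, so by Theorem \ref{2.7} the one point set $\mathcal{D}(1;0)$ is covered both by $E_{1}:=E^{2^{-1}}_{2^{-1}}$ and by $E_{2}:=E^{2^{-1}s}_{2^{-1}s}$, each consisting (Definition \ref{1.23}) of $2n$ circles, all through $0$, with ``boundary'' orbit $\Phi(1)$ and $\Phi(s)$ respectively. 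Using the tangency and translation facts from the proof of Theorem \ref{2.7}, Lemma \ref{oss2}, Theorem \ref{1.26} and the $\Phi$-symmetry of Lemma \ref{1.25}, I would determine the incidence multiplicities of the covering $E_{1}$: the point $0$ lies on all $2n$ circles; each point of $\Phi(1)$ lies on exactly one (the one tangent there); and each point of any $\Phi(c_{i})$ lies on exactly two. The analogous statement holds for $E_{2}$ with $\Phi(s)$ in place of $\Phi(1)$.

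Now fix a single circle $C\in E_{1}$. Then $|C|=2n$, $|C\cap\Phi(1)|=1$ and $|C\cap\Phi(c_{i})|=2$ for every $i$, so $C$ contains $0$, two points of $\Phi(s)$, and $2n-3$ further points outside $\Phi(s)\cup\{0\}$. Double counting the pairs $(p,C')$ with $p\in C\cap C'$ and $C'\in E_{2}$, via the $E_{2}$-multiplicities, gives
\[
\sum_{C'\in E_{2}}|C\cap C'|=2n+2\cdot 1+2\cdot(2n-3)=6n-4 .
\]
On the other hand each of the $2n$ circles $C'\in E_{2}$ contains $0\in C$ and is distinct from $C$ (for instance $|C'\cap\Phi(1)|=2\neq 1=|C\cap\Phi(1)|$), so $|C\cap C'|\leq 2$ by circularity; hence $\sum_{C'\in E_{2}}|C\cap C'|\leq 4n$. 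Thus $6n-4\leq 4n$, i.e. $n\leq 2$. Finally, when $n=2$ the set $\mathcal{D}(1;0)\setminus\{0\}=\Phi(1)\cup\Phi(c_{1})$ has exactly $8$ elements and contains the subgroup $S$, whose order is divisible by $|\Phi|=4$; so $|S|\in\{4,8\}$, and $|S|=8$ would force $S=\mathcal{D}(1;0)\setminus\{0\}$, a multiplicative group, contrary to hypothesis. Hence $|S|=4$ and $S=\Phi$, contradicting $S\supsetneq\Phi$. This proves $S=\Phi$ and with it the lemma.

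I expect the delicate part to be the incidence bookkeeping of the two coverings in the second and third paragraphs: one must check carefully that each of $E_{1},E_{2}$ is a family of $2n$ pairwise distinct circles meeting in at most one point besides $0$, that $\Phi(c_{i})\subseteq\mathcal{D}(1;0)$ genuinely forces each circle of $E_{1}$ to meet $\Phi(c_{i})$ in exactly two points (this is where Lemma \ref{oss2} and Theorem \ref{1.26} are combined with $\Phi$-symmetry), and that the orbit decomposition of $\mathcal{D}(s;0)$ given by Theorem \ref{card} has boundary orbit precisely $\Phi(s)$. The remaining ingredients — the reduction to $S$, the inclusion $\Phi\subseteq S$, and the $n=2$ counting argument — are routine; note that the hypothesis on $\mathcal{D}(1;0)\setminus\{0\}$ is really used only in the borderline case $n=2$.
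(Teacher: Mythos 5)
Your proof is correct, but it takes a genuinely different route from the paper's. Both arguments share the reduction (via Lemmas \ref{2.3} and \ref{2.4}) to showing that $t\cdot\mathcal{D}(1;0)=\mathcal{D}(1;0)$ forces $t\in\Phi$, and both start from the orbit decomposition of Theorem \ref{card}. From there the paper argues multiplicatively: if $a\notin\Phi(1)$ stabilizes the disk, the powers $a,a^{2},\dots,a^{n-1}$ occupy the $n-1$ orbits $\Phi(c_{i})$, $a^{n}$ is the first power landing in $\Phi(1)$, and a short divisibility analysis of the order of $a^{n}$ in the cyclic group $\Phi(1)$ shows that $\mathcal{D}(1;0)\setminus\{0\}$ is the multiplicative subgroup generated by $a$ or by $-a$, contradicting the hypothesis. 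You instead run an incidence double count: covering the disk by the two tangent-circle families $E^{2^{-1}}_{2^{-1}}$ and $E^{2^{-1}s}_{2^{-1}s}$ and comparing the point multiplicities ($2n$ at $0$, $1$ on the tangent orbit, $2$ elsewhere) against the circularity bound $|C\cap C'|\leq 2$ gives $6n-4\leq 4n$, hence $n\leq 2$, and only the residual case $n=2$ needs the non-group hypothesis. The bookkeeping you flag does check out: the multiplicities follow from the tangency computation in Theorem \ref{2.7}, Lemma \ref{oss2}, Theorem \ref{1.26} and the $\Phi$-equivariance of Lemma \ref{1.25}, and $C\neq C'$ is guaranteed by $|C\cap\Phi(1)|=1\neq 2=|C'\cap\Phi(1)|$, so no appeal to uniqueness of the representation $\Phi(a)+b$ is needed. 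A side benefit of your argument is that for $n\geq 3$ it never uses the hypothesis that $\mathcal{D}(1;0)\setminus\{0\}$ is not a group, so it proves slightly more; a side cost is that the paper's computation identifies exactly what the exceptional disks look like (the subgroup generated by $a$ or $-a$, plus $0$), which your counting does not.
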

\begin{proof}
From the previous lemmas, we equivalently show that
\[
\mathcal{D}(a;0)=\mathcal{D}(1;0) \Leftrightarrow a\in \Phi(1).
\]
Suppose $\Phi=2n$, from Theorem \ref{card}, there exists $c_1,\dots,c_{n-1}$ such that $\mathcal{D}(1;0)=\Phi(0)\cup\Phi(1)\cup\Phi(c_1)\cup\dots\cup\Phi(c_{n-1})$.

Let now $a\in \Phi(1)$. From Lemma \ref{2.4},
$\mathcal{D}(a;0)=\Phi(0)\cup\Phi(a)\cup\Phi(ac_1)\cup\dots\cup\Phi(ac_{n-1})$, and then $\mathcal{D}(a;0)=\mathcal{D}(1;0)$, as $a\in \Phi(1)$.

Conversely, let $\mathcal{D}(a;0)=\mathcal{D}(1;0)$ and suppose $a\notin \Phi(1)$.
Observe that for all $1\leq i,j\leq n-1$ with $i\neq j$, $a^i$ and $a^j$ belongs to different orbits, and then it is possible to rename $c_1,\dots ,c_{n-1}$ such that $a^i\in \Phi(c_i)$.
Moreover, it follows that $a^n$ is the smallest power of $a$ belonging to $\Phi(1)$.
Let now $t$ be the order of $a^n$ in the cyclic group $\Phi(1)$, which is isomorphic to a subgroup of $\Z_p^*$. By definition $(a^n)^t=(a^t)^n=1$, and then $a^t$ has order $n$ in $\Z_p^*$, which implies $a^t\in \Phi(1)$ and $n\leq t\leq 2n$. Let now $q\in \mathbb{N}$ and $0\leq r<n$ be such that $t=nq+r$, from $a^t=(a^n)^qa^r$ it follows $a^r=a^t(a^n)^{-q}\in \Phi(1)$, and then $r=0$ and $q\in\{1,2\}$.
If $q=2$, then $t=2n$ and all the $2n^2$ distinct power of $a$ belong to $\mathcal{D}(1;0)$, and so $\mathcal{D}(1;0)\setminus\{0\}$ is the multiplicative subgroup generated by $a$, an absurd.
So let $q=1$ and $t=n$. If $n$ is even, then $(a^{n/2})^2n=1$ and $a^{n/2}\in \Phi(1)$ with $n/2<n$, which is absurd. If $n$ is odd, then $\Phi=\langle-a^n\rangle$ and $\langle-a\rangle\subseteq \mathcal{D}(1;0)$, and again $\mathcal{D}(1;0)\setminus\{0\}$ is the multiplicative subgroup generated by $-a$, an absurd.
\end{proof}

\begin{theorem} \label{2.20}
Let $(N,\Phi)$ be a finite circular field-generated Ferrero pair with $|N|=p$, $p$ prime, and $|\Phi|=2n$. Then $(N, \mathcal{B}^{\mathcal{D}}, \in)$ is a BIBD. Moreover, if $\mathcal{D}(1;0)\setminus\{0\}$ is not a multiplicative group, then $(N, \mathcal{B}^{\mathcal{D}},\in)$ is a BIBD of parameters $v=p$, $b=p(p-1)/2n$, $k=2n^2+1$, $r=(p-1)(2n^2+1)/2n$, $\lambda=n(2n^2+1)$.
\end{theorem}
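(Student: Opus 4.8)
The plan is to verify the two axioms of a BIBD for $(N,\mathcal{B}^{\mathcal{D}},\in)$ — that every block has the same size $k$, that every point lies in the same number $r$ of blocks, and that every pair of points lies in the same number $\lambda$ of blocks — and then to read off the numerical parameters from counting identities. First I would dispose of the block size: by Theorem~\ref{card}, under the standing hypothesis ($|N|=p$ prime, $|\Phi|=2n$) every disk has exactly $2n^2+1$ points, so $k=2n^2+1$ is forced; note this already uses only that $p$ is odd enough for $|\Phi|$ to be even, and that $\mathcal{D}(a;b)\neq N$ — which holds since $2n^2+1<p$ whenever the pair is circular, by the bound $k\le (3+\sqrt{4v-7})/2$ in Proposition~\ref{1.19} together with $v=p$. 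So in the circular prime case no disk degenerates to all of $N$, and Lemmas~\ref{lemma1} and~\ref{lemma2} then say that the map $(a,b)\mapsto \mathcal{D}(a;b)$ has fibers exactly the pairs $(a',b)$ with $\mathcal{D}(a';0)=\mathcal{D}(a;0)$; in particular the center $b$ is recoverable from the disk.

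Next I would count blocks. By Lemma~\ref{lemma2} a disk determines its center uniquely, and the disks with a fixed center $b$ are in bijection with the distinct sets $\mathcal{D}(a;0)$, $a\in N^0$. Under the extra hypothesis that $\mathcal{D}(1;0)\setminus\{0\}$ is not a multiplicative group, Lemma~\ref{lemma3} identifies $\mathcal{D}(a;0)=\mathcal{D}(a';0)$ with $a\in\Phi(a')$, so there are exactly $(p-1)/2n$ distinct disks through each fixed center, hence $b=p\cdot(p-1)/2n$ blocks in all. For the replication number $r$: fix a point $x$; by translation-homogeneity (Lemma~\ref{2.3}) the number of disks containing $x$ equals the number containing $0$ translated appropriately, but cleaner is a double count. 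Count incident pairs $(x,\mathcal{D})$ with $x\in\mathcal{D}$: this is $bk$ on one side; by the transitivity of translations (every point sits in as many disks as $0$ does, since $\mathcal{D}(a;b)=\mathcal{D}(a;0)+b$ and translation is a bijection on $N$ and on $\mathcal{B}^{\mathcal{D}}$) the count is $vr$, giving $r=bk/v=(p-1)(2n^2+1)/2n$. One must check $r$ is an integer — equivalently that $2n\mid (p-1)(2n^2+1)$ — which follows since $2n=|\Phi|$ divides $p-1=|N|-1$ by Theorem~\ref{1.13}(4).

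The main obstacle is the existence of the constant $\lambda$, i.e.\ that every pair $\{x,y\}$ lies in the same number of disks; the numerology then forces $\lambda(v-1)=r(k-1)$, giving $\lambda=n(2n^2+1)$. Here is how I would get it. By translation-homogeneity it suffices to show that for each nonzero $d\in N$, the number of disks containing both $0$ and $d$ depends only on... and in fact does not depend on $d$ at all: since $N$ is a field-generated pair and $\Phi$ acts on $N^0$, for any $d,d'\in N^0$ there is a field element $a$ with $ad=d'$, and multiplication by $a$ sends disks to disks (Lemma~\ref{2.4}, $\mathcal{D}(a';0)=a^{-1}\mathcal{D}(aa';0)$ up to reindexing, together with $\mathcal{D}(a';b')\mapsto a\mathcal{D}(a';b')=\mathcal{D}(aa';ab')$ from Corollary~\ref{2.5}) while sending the pair $\{0,d\}$ to $\{0,d'\}$; hence $\means{0,d}=\means{0,d'}$. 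Combined with translation this shows $\means{x,y}$ is a constant $\lambda$ for all pairs, so the structure is balanced. The delicate point to get exactly right is that multiplication by $a$ is a genuine bijection $\mathcal{B}^{\mathcal{D}}\to\mathcal{B}^{\mathcal{D}}$ — that it neither collapses two distinct disks nor misses any — which follows because $a$ is a fixed-point-free automorphism of $(N,+)$ permuting the blocks $N^*\cdot r+c$ and hence the families $E^r_r$, so by Theorem~\ref{2.7} it permutes the disks; once that is in place, being a BIBD is immediate, and the parameter formulas follow from $vr=bk$, $\lambda(v-1)=r(k-1)$, and the three computed quantities $v=p$, $k=2n^2+1$, $b=p(p-1)/2n$.
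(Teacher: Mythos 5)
Your proposal is correct and follows essentially the same route as the paper's own proof: block size from Theorem~\ref{card}, balance via the translation reduction $\means{x,y}=\means{0,y-x}$ followed by field multiplication to collapse all pairs $\{0,z\}$ onto $\{0,1\}$, the block count from Lemmas~\ref{lemma1}--\ref{lemma3}, and the remaining parameters from the standard identities of Proposition~\ref{1.19}. The only (welcome) additions are your explicit check that no disk equals $N$ --- which the paper defers to the remark following the theorem --- and the explicit verification that multiplication by a field element permutes $\mathcal{B}^{\mathcal{D}}$, which the paper leaves implicit.
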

\begin{proof}
From Theorem \ref{card} each block contains exactly $2n^2+1$ elements, so $k=2n^2+1$. Let now $b\in N$, then $b\in \mathcal{D}(a;b)+c\Leftrightarrow b\in \mathcal{D}(a;0)+b+c\Leftrightarrow -c\in \mathcal{D}(a;0)$, and this is possible for $2n^2+1$ distinct values of $c$. From Lemma \ref{lemma1} there exist exactly $2n^2+1$ distinct disks of radius $a$ containing $b$, and from Lemma \ref{lemma2} if there exist $d$ distinct disks of radius $0$, then $b$ belongs to $d(2n^2+1)$ distinct blocks. For the generality we have chosen $b$, we have $r=d(2n^2+1)$. Now let $x,y\in N$, $x\neq y$ and $x\neq 0$; then
\[
    x,y\in \mathcal{D}(a;b) \,\Leftrightarrow\, x-x,y-x\in \mathcal{D}(a;b)-x \,\Leftrightarrow\, 0,y-x\in \mathcal{D}(a;b-x),
\]
and therefore $\means{x,y}=\means{0,y-x}$. Now if $z\in N\setminus\{0,1\}$, we obtain
\[
    0,z\in \mathcal{D}(a;b) \,\Leftrightarrow\, 0z^{-1},zz^{-1}\in \mathcal{D}(a;b)z^{-1} \,\Leftrightarrow\, 0,1\in \mathcal{D}(az^{-1};bz^{-1}),
\]
that is, from Lemma \ref{lemma2}, $\means{0,z}=\means{0,1}$ (indeed, if $\mathcal{D}(c;d)\neq \mathcal{D}(a;b)$ contains $\{0,z\}$, then the disk $\mathcal{D}(z^{-1}c;z^{-1}d)\neq \mathcal{D}(z^{-1}a;z^{-1}b)$ contains $\{0,1\}$). Therefore $\means{x,y}=\means{0,y-x}=\means{0,1}$ and $(N,\mathcal{B}^{\mathcal{D}},\in)$ is a BIBD.
We know from Lemma \ref{lemma1} that $\mathcal{D}(a;b)=\mathcal{D}(a;c)$ implies $b=c$, so the distinct disks of fixed radius are as many as the number of elements of $\Z_p$. From Lemma \ref{lemma3} two disks having the same centre coincide if, and only if, their radii belong to the same orbit, and then the number of distinct disks with fixed centre is exactly the number of the later classes of $\Phi$ in $\Z_p$. Then the number of distinct disks is $p(p-1)/2n$, and knowing that  $(N,\mathcal{B}^{\mathcal{D}},\in)$ is a BIBD, the values of $r$ and $\lambda$ can be obtained from those of $v$ ,$b$, and $k$ applying Proposition \ref{1.19}.
\end{proof}

\begin{remark}
In the hypothesis of Theorem \ref{card}, if $|N|=p$, $p$ prime, the BIBD $(N, \mathcal{B}^{\mathcal{D}}, \in)$ can not contain only one disk, indeed if $\mathcal{D}(a;b)=N$, then, from Theorem \ref{card} $2n^2+1=p$, but from circularity $2n\leq {3+\sqrt{4p-7}\over 2}$ and so either $n=1$, which is not possible as the nearring is planar, or $n=2$ and $p=9$, which is not possible as $p$ is prime.
\end{remark}

\section{Future work}
In this paper we have obtained some quantitative results in the even case ($|\Phi|$ even), leaving open the odd case. This is not very surprising, in fact it seems that the odd case is always harder than the even one, see for example (\cite{K-92},\S IV.5.19). Nevertheless, we are able to give some reasonable conjectures, in particular what we believe is that if (with the usual
notation) $|\Phi|=2n+1$, then $|\mathrm{M}^{a,b}|=2n$. From this, we could easily deduce that in the field-generated case the formula
\bd
    \mathcal{D}(a;b)=\bigcup_{r\in \mathrm{M}^{a,b}}\{x\in \Phi(r)+c\mid \Phi(r)+c\in E^r_{-r}+b\}
\ed
holds.

\newpage
\noindent Anna Benini\\
Dipartimento di Matematica,\\
Universit\`{a} degli Studi di Brescia,\\
via Valotti, 9 \\
25133 Brescia, Italy \\
\verb"anna.benini@ing.unibs.it"

\smallskip
\smallskip

\noindent Achille Frigeri, Ph.D. \\
Dipartimento di Elettronica e Informazione, \\
Politecnico di Milano, \\
Via Ponzio 34/5, \\
20133 Milano, Italy\\
\verb"achille.frigeri@polimi.it"

\smallskip
\smallskip

\noindent Fiorenza Morini\\
Dipartimento di Matematica,\\
Universit\`{a} degli Studi di Parma,\\
viale G.P. Usberti, 53/A \\
43100 Parma, Italy \\
\verb"fiorenza.morini@unipr.it"

\end{document}